\newcommand{\udots}{.\raisebox{.5ex}[0pt][0pt]{.}
\raisebox{1ex}[0pt][0pt]{.}}
\theoremstyle{plain}
\newtheorem{theorem}{Theorem}
\theoremstyle{plain}
\newtheorem{lemma}{Lemma}
\theoremstyle{plain}
\theoremstyle{definition}
\newtheorem{definition} {Definition}
\theoremstyle{definition}
\newtheorem{example} {Example}
\theoremstyle{remark}
\newtheorem*{note}{Remark}
\newcommand{\ord}{\mathop{\mathrm{ord}}}
\newcommand{\Hom}{\mathop{\mathrm{Hom}}}
\newcommand{\divi}{\mathop{\mathrm{div}}}
\newcommand{\rk}{\mathop{\mathrm{rk}}}
\newcommand{\trdeg}{\mathop{\mathrm{tr.deg}}}
\newcommand{\mo}{\mathop{\mathrm{mod}}}
\newcommand{\rr}[2]{\ensuremath{(\{ \alpha_{#1} \},\{\alpha_{#2}\})}}
\newcommand{\rrr}[3]{\ensuremath{ (\{ \alpha_{#1} \}, \{ \alpha_{#2}, \alpha_{#3} \})}}
\newcommand{\e}{\ensuremath{2 \varepsilon}}
\newcommand{\ee}[2]{\ensuremath{\varepsilon_{#1} - \varepsilon_{#2}}}
\newcommand{\eeee}[4]{\ensuremath{\varepsilon_{#1} + \varepsilon_{#2} + \varepsilon_{#3} + \varepsilon_{#4}}}
\definecolor{gray}{gray}{0.8}
\definecolor{black}{gray}{0.0}
\newcommand{\graybox}[2]{\color{gray}\color@block{#1\unitlength}{#2\unitlength}{0\unitlength}}
\newcommand{\blackbox}[2]{\color{black}\color@block{#1\unitlength}{#2\unitlength}{0\unitlength}}
\newcommand{\graysquare}{\graybox{10}{10}}
\newcommand{\myblacksquare}{\blackbox{10}{10}}
\newcommand{\bs}[3]{\multiput(#1,#2)(10,-10){#3}{\myblacksquare}}
\newcommand{\gs}[3]{\multiput(#1,#2)(-10,0){#3}{\graysquare}}
\newcommand{\gsdown}[3]{\multiput(#1,#2)(0,-10){#3}{\graysquare}}
\begin{document}

\title{Classification of double flag varieties of complexity $0$ and $1$}

\author{Elizaveta Ponomareva}
\address{
Moscow State University\\
Faculty of Mechanics and Mathematics\\
Department of Higher Algebra\\
119991 Moscow, Russia}
\email{lizaveta@yandex.ru}

\date{\today}

\subjclass[2010]{22E46}

\keywords{Semisimple Lie groups, double flag varieties, complexity, linear representations}

\begin{abstract}
A classification of double flag varieties of complexity $0$ and $1$ is obtained. An application of this problem to decomposing tensor products of irreducible representations of semisimple Lie groups is considered.
\end{abstract}

\maketitle

\section {Introduction}

Let $G$ be a semisimple complex Lie group, $B$ a Borel subgroup of $G$. Suppose that the group $G$ acts on an irreducible complex algebraic variety $X$. This action induces an action of $B$ on $X$.

\begin{definition}
The codimension  of a generic $B$-orbit in $X$ is called the complexity of an action $G:X$ and is denoted by $c(X)=c_G(X)$.
\end{definition}

\begin{note}
By the Rosenlicht theorem,
 $c(X) = \trdeg \; \mathbb{C}(X)^B / \mathbb{C}$.
\end{note}

A subgroup $P \subseteq G$  is parabolic if $P$ contains a Borel subgroup. Suppose $P$ and $Q$ are parabolic subgroups. The variety $X = G/P \times G/Q$ is called a double flag variety. This paper is devoted to classification of double flag varieties of complexity at most~$1$. Littelmann \cite{Lit} classified double flag varieties of complexity $0$ for maximal parabolic subgroups. Stembridge \cite{St} classified all double flag varieties of complexity~$0$. Panyushev \cite{Pan} found complexities of double flag varieties for all maximal parabolic subgroups. In this paper we obtain these already known results by a uniform method and complete the classification in the case of complexity~$1$.

The problem of classifying double flag varieties of complexity $0$ and $1$ has an application to decomposing tensor products of irreducible representations of $G$ into irreducible summands. Any irreducible $G$-module can be realized as the space of global sections for some line bundle $\mathcal{L}$ over $G/P$ (here $P$ is parabolic). We may regard the
tensor product of the spaces of sections $H^0(G/P,\mathcal{L}) \otimes H^0(G/Q,\mathcal{M})$ as the space of sections of a line bundle over the product of varieties $G/P$ and $G/Q$, i.e., $H^0(G/P \times G/Q, \mathcal{L}\boxtimes \mathcal{M})$. Here $\mathcal{L}\boxtimes \mathcal{M} \rightarrow G/P \times G/Q$ is a line bundle such that the fibre $(\mathcal{L}\boxtimes \mathcal{M})_{(x, y)}$ over the point $(x,y)$ is the tensor products of the fibers $\mathcal{L}_x$ and $\mathcal{M}_y$ over the points $x \in G/P$ and $y \in G/Q$. If the complexity of the variety $X = G/P \times G/Q$ equals $0$ or $1$, then there exists an effective method to decompose the space of sections $H^0(X,\mathcal{N})$ of a line bundle $\mathcal{N} \rightarrow X$ into irreducible submodules.

 Suppose a semisimple group is decomposed into almost direct product of simple subgroups: $G = G_1 \cdot \ldots \cdot G_s$. Then parabolic subgroups $P, Q \subseteq G$ are decomposed into almost direct products of parabolic subgroups $P_i, Q_i \subseteq G_i$. We have $c_G(G/P \times G/Q) = c_{G_1}(G_1/P_1 \times G_1/Q_1) + \ldots + c_{G_s}(G_s/P_s \times G_s/Q_s)$. So the problem of computing complexity of double flag varieties for semisimple groups reduces to the same problem for simple groups.

 Suppose $G$ is a classical matrix group; then we assume that $B$ consists of upper triangular matrices (here we assume that the group $G$ preserves a bilinear form with an antidiagonal matrix in orthogonal and symplectic  cases).
Then parabolic subgroups containing $B$ have a block-triangular structure and are determined by the sizes of diagonal blocks.  A group $SO_n$ for even $n$ is an  exception. For this group not all parabolic subgroups have this form. The remaining parabolics are transformed to the described form by conjugation with transposition of two middle basic vectors (the diagram automorphism). We mark such parabolic subgroups with strokes.

In case of exceptional groups parabolic subgroups are determined by a subset $\Pi \setminus I$ of the set of simple roots $\Pi$, where $I \subseteq \Pi$ is the system of simple roots of a standard Levi subgroup. Simple roots are numbered as in  \cite{Vin}.

In this paper we prove the following classification theorems.

\begin{theorem} Let $G$ be a classical matrix group ($SL_n, SO_n, Sp_n$). Then all double flag varieties of complexity $0$ and $1$ correspond to the pairs of parabolic subgroups given in Tables \ref{sl}, \ref{so}, \ref{sp} (the classification is given up to permutation of parabolics in a pair in all cases, up to simultaneous transposition with respect to the secondary diagonal for $SL_n$, and up to the diagram automorphism for $SO_{2n}$).
\end{theorem}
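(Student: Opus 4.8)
The plan is to reduce the computation of $c_G(G/P \times G/Q)$ to the complexity of a Levi subgroup acting on a single flag variety, and then to run a monotonicity argument that cuts the problem down to finitely many critical block configurations in each classical type. The first and decisive step is the reduction
\[
c_G(G/P \times G/Q) = c_{L_P}(G/Q) = c_{L_Q}(G/P),
\]
where $L_P$ and $L_Q$ are Levi subgroups of $P$ and $Q$. I would prove this by looking at the $B$-equivariant projection $G/P \times G/Q \to G/P$: since $B$ has a dense orbit on $G/P$, a generic $B$-orbit on the product lies over it, so $c_G(G/P \times G/Q)$ equals the codimension of a generic orbit of the isotropy group $B \cap w_0 P w_0^{-1}$ acting on the fibre $G/Q$. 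A direct computation identifies $B \cap w_0 P w_0^{-1}$ with a Borel subgroup of a Levi subgroup of $w_0 P w_0^{-1}$, and after the obvious conjugation this codimension is exactly $c_{L_P}(G/Q)$. This reformulation is the heart of the uniform method: $L_P$ is an almost direct product of smaller classical groups with a central torus, so the problem becomes inductive on the rank.

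Next I would record the monotonicity principle: if $P \subseteq P'$, then $G/P \to G/P'$ is a dominant $G$-morphism, hence $\mathbb{C}(G/P' \times G/Q)^B \subseteq \mathbb{C}(G/P \times G/Q)^B$ and the complexity does not increase when a parabolic is enlarged, i.e. when its block composition is coarsened. Thus the pairs $(P,Q)$ with $c \le 1$ form an up-set in the poset of parabolics ordered by inclusion. The classification then splits into two tasks: first, exhibiting small forbidden configurations whose complexity is already $\ge 2$, which by monotonicity rules out every finer configuration and leaves only finitely many candidate families per type; and second, computing the complexity exactly for each surviving family so as to separate the values $0$, $1$, and $\ge 2$.

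For the second task I would use the reduction above together with the matrix model of the classical groups. Writing $P$ and $Q$ through their diagonal block sizes, $G/Q$ becomes a variety of (isotropic) flags and $L_P$ a block-diagonal subgroup; a generic position of the flag relative to the block decomposition gives the open $L_P$-orbit, whose isotropy group $H = L_P \cap w Q w^{-1}$ yields $c_{L_P}(G/Q) = \dim(L_P/H) - \max \dim(B_{L_P}\text{-orbit in } L_P/H)$. These orbit dimensions are computed by elementary linear algebra, namely the ranks of the intersections of the subspaces of the flag with the coordinate block subspaces, which keeps the answer uniform across a family and reduces each case to a short combinatorial count in the block sizes. The main obstacle lies precisely here, at the boundary of the up-set: distinguishing complexity $1$ from complexity $2$ requires genuinely evaluating a generic-orbit dimension rather than merely bounding it, and one must check that the forbidden configurations from the first task are minimal, so that no admissible pair is excluded and no inadmissible pair survives.

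Finally I would treat the type-specific symmetries that organize the tables. For $SL_n$ the duality $V_\bullet \mapsto V_\bullet^{\perp}$, i.e. transposition with respect to the secondary diagonal, identifies each pair with its dual and must be used to avoid double counting. For $SO_{2n}$ the two families of maximal isotropic subspaces force a separate but parallel analysis of the parabolics marked with strokes, which is handled by the diagram automorphism; the symplectic case $Sp_n$ is uniform and slightly simpler. Collecting the surviving admissible families together with their computed complexities then produces Tables~\ref{sl}, \ref{so}, \ref{sp}, completing the classification up to the stated symmetries.
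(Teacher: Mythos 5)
Your opening reduction is correct and is genuinely different from the paper's: fibring $G/P\times G/Q$ over the open $B$-orbit in $G/P$ does give $c_G(G/P\times G/Q)=c_{L_P}(G/Q)$, since $B\cap w_0Pw_0^{-1}$ is indeed a Borel subgroup of the Levi of $w_0Pw_0^{-1}$ containing $T$. The paper instead invokes Panyushev's Theorem~\ref{theorm_panushev}, which reduces further, to the \emph{linear} action of $L\cap M$ on $\mathfrak{p}_u\cap\mathfrak{q}_u$; that linear model is what makes the subsequent case analysis by block pictures and explicit rational invariants tractable. However, your computational follow-up contains an error: the formula $c_{L_P}(G/Q)=\dim(L_P/H)-\max\dim\bigl(B_{L_P}\text{-orbit in }L_P/H\bigr)$ presupposes that $L_P$ has a \emph{dense} orbit on $G/Q$, which fails in general. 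Take $G=SL_3$, $P=Q=B$: then $L_P=T$ is two-dimensional while $G/Q$ is three-dimensional, so there is no open $L_P$-orbit, yet this pair appears in Table~\ref{sl} with complexity $1$. In general one must write $c_{L_P}(G/Q)=\dim G/Q-\max\dim\bigl(B_{L_P}\text{-orbit}\bigr)$, and the codimension of the generic $L_P$-orbit enters as an additional term; your formula breaks precisely at the boundary cases (deciding $c=1$ versus $c\geqslant 2$) that the classification must settle, which you yourself identify as the main obstacle.

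The larger gap is that the proposal never carries out the classification. For a theorem whose content is three tables, the proof essentially \emph{is} the exhaustive case analysis: one needs (i) concrete lower-bound tools --- in the paper, the invariants of type ``square'' and ``triangle'' (Lemmas~\ref{square}, \ref{triangle}), the lemma on three or four blocks in a row (Lemma~\ref{in_one_row_3_4}), and the dimension estimate of Lemma~\ref{complexity} --- which, combined with the monotonicity of Lemmas~\ref{subgroup} and~\ref{subblocks}, cut the infinitely many block configurations down to a finite list; and (ii) an explicit canonical-form/stabilizer computation of the complexity for every surviving family, uniformly in the block sizes, including the separate treatment of the special (primed) parabolics of $SO_{2n}$. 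You state that forbidden configurations should be exhibited and that each surviving family should be computed, but you exhibit no forbidden configuration and compute no family, so neither inclusion between your candidate list and the tables is established. As it stands, the proposal is a plausible strategy outline with one incorrect key formula, not a proof of the stated theorem.
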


\noindent
\begin{table}[!h]
\begin{center}
\begin{tabular}{|c|l|l||l|l|}
\hline & \multicolumn{2}{|c||}{ complexity $0$}  &
 \multicolumn{2}{|c|}{complexity $1$} \\ \hline

number of blocks & & & & \\
 in $P$ and $Q$ & \multicolumn{1}{|c|}{$P$} & \multicolumn{1}{|c||}{$Q$} &  \multicolumn{1}{|c|}{$P$} & \multicolumn{1}{|c|}{$Q$} \\
\hline \hline

2,2 &   $(p_1,p_2)$ &  $(q_1,q_2)$               &&\\ \hline

2,3 &   $(p_1,p_2)$ &  $(1,q_2,q_3)$               &  $(3,p_2)$, $p_2 \geqslant 3$ &  $(q_1,q_2,q_3)$, $q_1,q_2,q_3 \geqslant 2$         \\
    &   $(p_1,p_2)$ &  $(q_1,1,q_3)$               &  $(p_1,p_2)$, $p_1, p_2 \geqslant 3$ &  $(2,2,q_3)$, $q_3 \geqslant 2$   \\
    &   $(2,p_2)$ &  $(q_1,q_2,q_3)$               & $(p_1,p_2)$, $p_1, p_2 \geqslant 3$ &  $(2,q_2,2)$, $q_2 \geqslant 2$    \\ \hline

2,4 &&   & $(2,p_2)$ &  $(q_1,q_2,q_3,q_4)$ \\

    &&   & $(p_1,p_2)$, $p_1, p_2 \geqslant 2$ &  $(1,1,1,q_4)$ \\

    &&   & $(p_1,p_2)$, $p_1, p_2 \geqslant 2$ &  $(1,1,q_3,1)$  \\ \hline

2,$s$ &   $(1,p_2)$ &  $(q_1,q_2, \dots, q_s)$ && \\ \hline

3,3 &&   & $(1,1,p_3)$ &  $(q_1,q_2,q_3)$ \\

    &&   & $(1,p_2,1)$ &  $(q_1,q_2,q_3)$  \\ \hline

\end{tabular}
\caption{pairs of parabolic subgroups corresponding to double flag varieties of complexity $0$ and $1$ for $SL_n$}
\label{sl}
\end{center}
\end{table}

\noindent
\begin{table}[!h]
\begin{center}
\begin{tabular}{|c|l|l||l|l|}
\hline & \multicolumn{2}{|c||}{ complexity $0$}  &
 \multicolumn{2}{|c|}{complexity $1$} \\ \hline

number of blocks & & & & \\
 in $P$ and $Q$ & \multicolumn{1}{|c|}{$P$} & \multicolumn{1}{|c||}{$Q$} &  \multicolumn{1}{|c|}{$P$} & \multicolumn{1}{|c|}{$Q$} \\
\hline \hline

2,2 &   $(p,p)$ &  $(p,p)$               &&                                             \\
    &   $(p,p)$ &  $(p,p)'$             &&                                             \\ \hline
2,3 &   $(p,p)$ & $(q_1,q_2,q_1)$, $q_1 \leqslant 3$  &  $(6,6)$ & $(4,4,4)$                       \\
    &   $(p,p)$ & $(q,2,q)$              &&                                            \\ \hline
2,4 &   $(p,p)$ & $(1,q,q,1)$            &  $(4,4)$ & $(2,2,2,2)$                     \\
    &   $(p,p)$  &  $(1,q,q,1)'$         &  $(5,5)$ & $(2,3,3,2)$                     \\
    &   $(4,4)$  &  $(2,2,2,2)'$         &  $(5,5)$ & $(3,2,2,3)$                     \\
    &&                                   &  $(5,5)$ & $(2,3,3,2)'$\\
    &&                                   &  $(5,5)$ & $(3,2,2,3)'$\\ \hline

2,5 &   $(p,p)$ & $(1,1,q,1,1)$         &  $(4,4)$ & $(1,2,2,2,1)$                \\
    &&                                   &  $(4,4)$ & $(2,1,2,1,2)$                   \\ \hline

2,6 &&                                   &  $(4,4)$ & $(1,1,2,2,1,1)$                 \\
    &&                                   & $(4,4)$  & $(1,1,2,2,1,1)'$                \\ \hline

3,3 &   $(1,p,1)$ & $(q_1,q_2,q_1)$      &   $(2,2,2)$ & $(2,2,2)$                    \\
    &   $(p,1,p)$ &  $(p,1,p)$           &   $(2,p,2)$, $p > 1$ & $(q,1,q)$     \\ \hline

3,4 &   $(1,p,1)$ & $(q_1,q_2,q_2,q_1)$  &   $(2,2,2)$ & $(1,2,2,1)$
\\ \hline

3,5 & &                                  &   $(1,p,1)$ & $(q_1,q_2,q_3,q_2,q_1)$       \\
    & &                                  &   $(2,1,2)$ & $(1,1,1,1,1)$                \\ \hline

3,6 & &                                  &   $(1,p,1)$ &
$(q_1,q_2,q_3,q_3,q_2,q_1)$   \\ \hline

4,4 & &                                  &   $(1,2,2,1)$ & $(1,2,2,1)$                \\
    &&& $(1,2,2,1)$  & $(1,2,2,1)'$  \\ \hline

\end{tabular}
\caption{pairs of parabolic subgroups corresponding to double flag varieties of complexity $0$ and $1$ for $SO_n$}
\label{so}
\end{center}
\end{table}

\noindent
\begin{table}[!h]
\begin{center}
\begin{tabular}{|c|l|l||l|l|}
\hline & \multicolumn{2}{|c||}{complexity $0$}  &
 \multicolumn{2}{|c|}{complexity $1$} \\ \hline

number of blocks & & & & \\
 in $P$ and $Q$ & \multicolumn{1}{|c|}{$P$} & \multicolumn{1}{|c||}{$Q$} &  \multicolumn{1}{|c|}{$P$} & \multicolumn{1}{|c|}{$Q$} \\
\hline \hline

2,2 & $(p,p)$ & $(p,p)$    &&       \\ \hline

2,3 &  $(p,p)$ & $(1,q,1)$ &                         $(p,p)$ & $(2,q,2)$    \\ \hline

2,4 && &                                             $(2,2)$ & $(1,1,1,1)$ \\ \hline

3,3 &  $(1,p,1)$ & $(q_1,q_2,q_1)$  &&     \\  \hline

3,4 && &                                             $(1,p,1)$ & $(q_1,q_2,q_2,q_1)$   \\ \hline

3,5 && &                                             $(1,p,1)$ & $(q_1,q_2,q_3,q_2,q_1)$  \\ \hline

\end{tabular}
\caption{pairs of parabolic subgroups corresponding to double flag varieties of complexity $0$ and $1$ for $Sp_n$}
\label{sp}
\end{center}
\end{table}

\begin{theorem}  1) There are no double flag varieties of complexity $0$ and $1$ for the groups $G_2$, $F_4$ and $E_8$.

 2) For $E_6$, the varieties of complexity $0$ correspond to the following pairs of parabolic subgroups:

\noindent \rr{1}{1}, \rr{1}{2}, \rr{1}{4}, \rr{1}{5}, \rr{1}{6},
\rr{2}{5}, \rr{4}{5}, \rr{5}{5}, \rr{5}{6}, \rrr{1}{1}{5},
\rrr{5}{1}{5};

 the varieties of complexity $1$ correspond to the following pairs of parabolic subgroups:

\noindent \rrr{1}{1}{2}, \rrr{1}{1}{6}, \rrr{1}{4}{5},
\rrr{1}{5}{6}, \rrr{5}{1}{2}, \rrr{5}{1}{6}, \rrr{5}{4}{5}, \rrr{5}{5}{6}.

 3) For $E_7$, the varieties of complexity $0$ correspond to the following pairs of parabolic subgroups:

\noindent \rr{1}{1}, \rr{1}{6}, \rr{1}{7};

 the varieties of complexity $1$ correspond to the following pairs of parabolic subgroups:

\noindent \rr{1}{2}.

\label{special}
\end{theorem}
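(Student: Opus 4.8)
The plan is to reduce the computation of $c_G(G/P \times G/Q)$ to the complexity of a Levi subgroup acting on a single flag variety, and then to exploit monotonicity so that the enumeration over pairs of parabolics becomes finite and mostly cheap. The starting point is the isomorphism $G/P \times G/Q \cong G \times_P (G/Q)$ of $G$-varieties, where $P$ acts on $G/Q$ by left translations. Since $B$ already acts on $G/P$ with a dense orbit (Bruhat decomposition), the generic $B$-orbit on the total space projects onto this dense orbit, whose stabilizer is conjugate to a Borel subgroup $B_L$ of the Levi $L = L_P$ of $P$. Counting dimensions fibrewise then gives $c_G(G/P \times G/Q) = c_L(G/Q)$, where $L$ acts on $G/Q$ by restriction of the $G$-action; by symmetry one also has $c_G(G/P \times G/Q) = c_{L_Q}(G/P)$, and I would use whichever Levi is larger. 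This identity is the uniform engine behind all of the tables, and it is precisely the branching picture underlying the tensor-product application.

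To evaluate $c_L(G/Q)$ I would first record the crude but decisive bound $c_L(G/Q) = \dim(G/Q) - \dim(\text{generic } B_L\text{-orbit}) \ge \dim(G/Q) - \dim B_L$, since any $B_L$-orbit has dimension at most $\dim B_L$. For the finitely many pairs that survive this bound I would pass to the open $L$-orbit on $G/Q$, which is $L/M$ with $M = L \cap \dot w Q \dot w^{-1}$ (a seaweed subgroup containing the maximal torus, with $w$ chosen from $W_L \backslash W / W_Q$ so that the orbit is dense); then $c_L(G/Q) = c_L(L/M)$. The action is spherical, i.e. complexity $0$, exactly when $M$ is a parabolic subgroup of $L$, so that $L/M$ is a genuine flag variety of $L$. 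In the remaining borderline cases I would compute $\trdeg \; \mathbb{C}(L/M)^{B_L}$ directly, either by exhibiting the generic $B_L$-stabilizer on $L/M$ or by linearizing at the base point and analysing the $B_L$-action on the tangent module $\mathfrak{l}/\mathfrak{m}$.

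The enumeration itself is controlled by monotonicity: a dominant $G$-morphism never increases complexity, and $G/P \times G/Q \to G/P' \times G/Q'$ is dominant whenever $P \subseteq P'$ and $Q \subseteq Q'$. Hence the pairs with $c \le 1$ form an order filter in the poset of pairs of parabolics ordered by inclusion, and it suffices to locate its minimal elements. Concretely I would start from pairs of maximal parabolics (the largest proper parabolics), compute their complexities by the reduction above, and then descend one corank at a time, stopping along each branch as soon as complexity $2$ is reached. The symmetries permitted in the statement, namely the interchange of $P$ and $Q$ and the diagram automorphism of $E_6$, cut down the number of inequivalent pairs that must be examined.

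For $G_2$, $F_4$ and $E_8$ the classification collapses at once: every proper parabolic is contained in a maximal one, so by monotonicity it is enough to treat pairs of maximal parabolics, and for these the dimension bound already forces $\dim(G/Q) - \dim B_{L_P} \ge 2$ in every case (the flag varieties are large while the Levi Borels are comparatively small), whence $c \ge 2$ throughout; this is consistent with Panyushev's computation \cite{Pan}. The substance of the theorem is therefore concentrated in $E_6$ and $E_7$, where the dimension bound prunes all but a short list of candidate pairs, and each survivor must be settled by an explicit determination of the generic $B_L$-orbit on the relevant flag variety. I expect this last step to be the main obstacle: the Levi subgroups and flag varieties occurring at the boundary are sizeable, the generic stabilizers are not governed by any single closed formula, and one must verify both that each listed pair really has complexity $0$ or $1$ and that every immediate predecessor in the poset has complexity at least $2$, so that the filter of small-complexity pairs is described exactly rather than merely bounded.
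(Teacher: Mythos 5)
Your skeleton (reduction to a Levi action, a dimension bound for pruning, monotonicity to organize the descent from maximal parabolics) is essentially the paper's strategy --- indeed your bound $\dim(G/Q)-\dim B_L$ equals $\frac{1}{2}(\dim G-\dim L-\dim M-\dim T)$ after substituting $\dim G/Q=\frac{1}{2}(\dim G-\dim M)$ and $\dim B_L=\frac{1}{2}(\dim L+\dim T)$, so it is literally Lemma~\ref{complexity}, and your order-filter argument is Lemma~\ref{subgroup}. But there is a concrete factual error that breaks part 1): you claim that for $E_8$ the dimension bound forces $c\geqslant 2$ for every pair of maximal parabolics. This fails for the pair in which both parabolics have Levi of type $E_7\times\mathbb{C}^{\times}$ (the pair $(\{\alpha_1\},\{\alpha_1\})$ in the paper's numbering): there $\dim (G/Q)=57$ while $\dim B_L=63+8=71$, so your bound gives $57-71=-14$, i.e., nothing at all. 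This pair survives all pruning, and the paper must compute its complexity explicitly, finding $c=2$; without that computation the $E_8$ assertion is unproven. (Your claim is fine for $G_2$ and $F_4$, where the bound evaluates to exactly $2$ for the worst pairs of maximal parabolics.)

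Second, the tool you propose for the surviving borderline cases is not sound as stated, and these cases are the bulk of the theorem. The criterion ``the action is spherical exactly when $M=L\cap wQw^{-1}$ is a parabolic subgroup of $L$'' is wrong in the ``only if'' direction: non-parabolic spherical subgroups are plentiful (symmetric subgroups, for instance), so this test could assign complexity $\geqslant 1$ to a complexity-$0$ pair. Moreover, an open $L$-orbit on $G/Q$ need not exist at all (e.g.\ a torus acting on a full flag variety), so ``pass to the open $L$-orbit $L/M$'' is not available in general. The paper sidesteps both issues: it uses Theorem~\ref{theorm_panushev} to replace the flag variety by the \emph{linear} $L\cap M$-module $\mathfrak{p}_u\cap\mathfrak{q}_u$, and then runs an exact combinatorial algorithm on roots (iteratively stripping a minimal weight $\mu_i$ together with the matched sets $E_i'$, $F_i'$, ending with $c=N-\rk\langle\mu_1,\dots,\mu_N\rangle$). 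That algorithm is what actually produces the $E_6$ and $E_7$ lists in parts 2) and 3); in your write-up the roughly twenty borderline computations are deferred as ``the main obstacle,'' so those lists are asserted rather than established.
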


The paper is organized as follows. In Section $2$, we discuss a method of decomposing the space of sections $H^0(X,\mathcal{N})$ of a line bundle $\mathcal{N} \rightarrow X$ into irreducible submodules whenexer the complexity of $X$ equals $0$ or $1$. Some examples of decomposing tensor products of irreducible representations using this method are considered. In Section $3$, some general theorems concerning complexity of double flag varieties are given. In Sections $4$ and $5$, we obtain the classification of double flag varieties of complexity $0$ and $1$ for classical and exceptional groups, respectively.

\section {Decomposition of spaces of sections}

Let $G$ act on a normal variety $X$. We consider prime $B$-stable divisors on $X$. To each prime divisor $D$ we assign a homomorphism $\ord_D : \mathbb{C}(X)^{\times} \rightarrow \mathbb{Z}$.

Any line bundle over $X$ can be $G$-linearized \cite{KKLV}. Any Cartier divisor $\delta$ is linearly equivalent to a $B$-stable divisor; this can be proved by choosing a $B$-semi-invariant rational section of the line bundle $\mathcal{O}(\delta)$ \cite{FMSS}.

\subsection {Case of complexity $0$}
In this case we have $\mathbb{C}(X)^B = \mathbb{C}$. Therefore any $B$-semi-invariant function is uniquely determined by its weight up to a scalar multiple. The value  $\ord_D(f)$ does not change if we multiply $f$ by a constant. Thus we can map (in general, not injectively) the set of $B$-stable prime divisors  to the group $\Hom(\Lambda, \mathbb{Z})$, where $\Lambda = \Lambda(X)$ is the lattice of eigenweights of $B$-semi-invariant rational functions on $X$. $B$-stable divisors can be regarded as vectors in $\Hom(\Lambda, \mathbb{Z})$; hence $\ord_D f_{\lambda} = \langle v_D, \lambda \rangle$, where $v_D \in \Hom(\Lambda, \mathbb{Z})$ is the  vector corresponding to $D$ and $f_{\lambda}$ is a function of weight $\lambda$.

There are finitely many prime $B$-stable divisors, since they lie in the complement of the open $B$-orbit.

Denote by $V_{\lambda}$ an irreducible $G$-module of highest weight $\lambda$. Denote by $\lambda^{\ast}$ the highest weight of the dual module. A map $\lambda \mapsto \lambda^{\ast}$ can be extended to all weights by linearity.

Now we formulate the main theorem about decomposing spaces of sections.

\begin{theorem}[\cite{Bri}]
Let $X$ be a variety of complexity $0$ and $\delta = \sum m_i D_i$  a Cartier divisor, where $D_i$ are the distinct $B$-stable divisors on $X$. Then

\[ H^0 (X, \mathcal{O}(\delta)) \simeq \bigoplus_{\lambda \in \mathcal{P}(\delta) \cap \Lambda} V_{\lambda + \pi (\delta)}, \]

\noindent where $\pi (\delta)$ is the weight of the canonical section $s_{\delta}$ corresponding to the divisor $\delta$ and
$$\mathcal{P}(\delta) = \{ \lambda \in \Lambda \otimes \mathbb{Q} \mid \langle v_i, \lambda \rangle \geqslant -m_i, \; \forall i \}$$ is a polytope in  $\Lambda \otimes \mathbb{Q}$, where $v_i$ are the vectors corresponding to $D_i$.
\label{decomposition0}
\end{theorem}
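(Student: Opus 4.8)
The plan is to identify global sections of $\mathcal{O}(\delta)$ with rational functions via a distinguished rational section, and then to read off the $G$-module structure from the $B$-semi-invariant sections. First I would construct the canonical section $s_\delta$ and its weight $\pi(\delta)$: fix the $G$-linearization of $\mathcal{O}(\delta)$ (unique, since a semisimple $G$ has no nontrivial characters) and let $s_\delta$ be the rational section with $\divi(s_\delta) = \delta$. Because $\delta$ is $B$-stable, for $g \in B$ the section $g \cdot s_\delta$ again has divisor $\delta$, so $g \cdot s_\delta = \chi(g) s_\delta$ for a character $\chi$ of $B$; I define $\pi(\delta)$ to be the weight corresponding to $\chi$. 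Multiplication by $s_\delta$ then gives a linear isomorphism from $L(\delta) = \{ f \in \mathbb{C}(X)^{\times} : \divi(f) + \delta \geqslant 0 \} \cup \{0\}$ onto $H^0(X, \mathcal{O}(\delta))$ (the standard description of sections of $\mathcal{O}(\delta)$), and this isomorphism carries a $B$-semi-invariant function of weight $\lambda$ to a $B$-semi-invariant section of weight $\lambda + \pi(\delta)$.

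Next I would extract the $G$-module structure. As a rational module over the reductive group $G$, $H^0(X, \mathcal{O}(\delta))$ is a direct sum of irreducibles, and the multiplicity of $V_\mu$ equals the dimension of the space of $B$-semi-invariant sections of weight $\mu$ (the highest weight vectors). By the previous step such a section is $f s_\delta$ with $f \in \mathbb{C}(X)^{(B)}$ of weight $\lambda = \mu - \pi(\delta)$, which forces $\lambda \in \Lambda$. Here the hypothesis $c(X) = 0$, i.e.\ $\mathbb{C}(X)^B = \mathbb{C}$, is decisive: two $B$-semi-invariant functions of the same weight differ by a $B$-invariant function, hence by a scalar, so $f$ is unique up to a scalar once $\lambda$ is fixed. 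This yields at once the multiplicity-free property and reduces matters to deciding for which $\lambda \in \Lambda$ the function $f = f_\lambda$ gives a regular section.

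Finally I would compute the regularity condition. Since $f_\lambda$ is $B$-semi-invariant, $\divi(f_\lambda)$ is $B$-stable, hence supported on the $B$-stable prime divisors $D_i$; along any non-$B$-stable prime divisor both $\divi(f_\lambda)$ and $\delta$ vanish. Thus $\divi(f_\lambda) + \delta \geqslant 0$ amounts to $\ord_{D_i}(f_\lambda) + m_i \geqslant 0$ for all $i$, and substituting $\ord_{D_i}(f_\lambda) = \langle v_i, \lambda \rangle$ from the discussion above turns this into $\langle v_i, \lambda \rangle \geqslant -m_i$, i.e.\ $\lambda \in \mathcal{P}(\delta)$. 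Combining the three steps, $V_\mu$ occurs in $H^0(X, \mathcal{O}(\delta))$, and then with multiplicity one, precisely when $\mu = \lambda + \pi(\delta)$ for some $\lambda \in \mathcal{P}(\delta) \cap \Lambda$, which is the asserted decomposition.

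I expect the step needing the most care to be the construction of $s_\delta$ as a genuinely $B$-semi-invariant rational section together with the correct bookkeeping of the weight shift $\pi(\delta)$, since the isomorphism $f \mapsto f s_\delta$ is only $B$-equivariant up to this twist and is not $G$-equivariant. By contrast, once the order formalism $\ord_{D_i}(f_\lambda) = \langle v_i, \lambda \rangle$ and the finiteness of the set of $B$-stable prime divisors are taken from the earlier part of the section, the passage from $\mathbb{C}(X)^B = \mathbb{C}$ to multiplicity-freeness and to the polytope description is essentially formal.
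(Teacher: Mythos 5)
Your proposal is correct and follows essentially the same route as the paper: represent a highest-weight section as $f_{\lambda} s_{\delta}$ and translate effectivity of $\divi f_{\lambda} + \delta$ into the polytope condition $\langle v_i, \lambda \rangle \geqslant -m_i$. The only difference is that you derive the multiplicity-free property directly from $\mathbb{C}(X)^B = \mathbb{C}$ (uniqueness of $B$-semi-invariant functions of a given weight up to scalar), whereas the paper simply invokes it as an equivalent characterization of spherical varieties; your version is a self-contained unpacking of the same argument.
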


\begin{proof}
One of the equivalent definitions of spherical variety is that for any $G$-line bundle $\mathcal{L} \rightarrow X$ the action $G:H^0(X, \mathcal{L})$ is multiplicity-free. (By definition, a variety is spherical if its complexity equals zero.)

Thus it is sufficient to describe the set of highest weights. A $B$-semi-invariant section can be represented as $s = f_{\lambda} s_{\delta}$.
The condition that the divisor  $\divi s = \divi f_{\lambda} + \delta$ is effective is equivalent to  $\lambda \in \mathcal{P}(\delta) \cap \Lambda$.
\end{proof}

\subsection {Case of complexity $1$}

For varieties of complexity $1$ the theory is a bit more complicated.
 Suppose for simplicity that $X$ is a rational variety. Then by the L\"{u}roth theorem we have $\mathbb{C}(X)^B \simeq \mathbb{C}(\mathbb{P}^1)$. Therefore $B$-semi-invariant functions are determined by their weights uniquely up to multiplication by a function from  $\mathbb{C}(X)^B \simeq \mathbb{C}(\mathbb{P}^1)$, i.e., a $B$-semi-invariant function can be represented as $f_{\lambda}q$, where $f_{\lambda}$ is a fixed function of weight $\lambda$ and $q \in \mathbb{C}(\mathbb{P}^1)$. There is a rational map $X \dashrightarrow \mathbb{P}^1$ whose general fibres are the closures of general $B$-orbits. Therefore we can describe prime $B$-stable divisors as follows. Except a finite number of them, prime $B$-stable divisors form a family parameterized by the projective line except a finite number of points.

Similar to the case of complexity $0$ we can associate a vector  $v_D \in \Hom(\Lambda, \mathbb{Z})$ to any $B$-stable prime divisor $D$  restricting $\ord_D$ to $\{ f_{\lambda} \mid \lambda \in \Lambda \}$ (here we assume that the map $\lambda \mapsto f_{\lambda}$ is a group homomorphism). By restricting $\ord_D$ to $\mathbb{C}(X)^B \simeq \mathbb{C}(\mathbb{P}^1)$ we obtain a valuation of $\mathbb{C}(\mathbb{P}^1)$ with center $z_D \in \mathbb{P}^1$ and order $h_D \in \mathbb{Z}_{+}$ of a local coordinate at $z_D$ (if $h_D = 0$, we can take any point from $\mathbb{P}^1$ for $z_D$). Then $\ord_D f = \langle v_D, \lambda \rangle + h_D \ord_{z_D} q$. Thus  we associate with $D$ a triple $(v_D, z_D, h_D)$. Remove sufficiently many points from the projective line. Then we may assume that for remaining points in $\mathbb{P}^1$ there exists a unique prime $B$-stable divisor $D$ such that $z_D = z$, and furthermore, $v_D = 0$, $h_D = 1$.

For varieties of complexity $1$ there is a similar theorem about decomposition of the space of sections:

\begin{theorem}[\cite{Tim}]
Let $X$ be a rational variety of complexity $1$ and  $\delta = \sum m_i D_i$ a Cartier divisor, where the sum ranges over all $B$-stable prime divisors on $X$ (we assume that only finitely many $m_i$ are nonzero). Then

\[ H^0 (X, \mathcal{O}(\delta)) \simeq \bigoplus_{\lambda \in \mathcal{P}(\delta) \cap \Lambda}
m(\delta, \lambda) V_{\lambda + \pi (\delta)}, \]

\noindent where $\pi (\delta)$ is the weight of the canonical section $s_{\delta}$ corresponding to the divisor $\delta$,

\[ \mathcal{P}(\delta) = \{ \lambda \in \Lambda \otimes \mathbb{Q} \mid \langle v_i, \lambda \rangle \geqslant
-m_i, \; \forall i , \; \text{whenever} \; h_i = 0 \}, \]

\noindent where $(v_i, z_i, h_i)$ is the triple corresponding to the divisor $D_i$, and the multiplicity $m(\delta, \lambda)$ of the module $V_{\lambda + \pi (\delta)}$ in the decomposition equals

$$m(\delta, \lambda) = \max \left( 1 + \sum_{z \in \mathbb{P}^1} m_z, 0 \right) , $$

$$\text{where } m_z = \min_{z_i = z, h_i \neq 0} \left[\frac{\langle v_i, \lambda
\rangle + m_i} { h_i }\right], \forall z \in \mathbb{P}^1. $$

\end{theorem}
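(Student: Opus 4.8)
The plan is to reduce the computation of multiplicities to counting global sections of a line bundle on $\mathbb{P}^1$, paralleling the proof of Theorem~\ref{decomposition0} but keeping track of the non-constant $B$-invariant functions. Since $H^0(X,\mathcal{O}(\delta))$ is a $G$-module, the multiplicity of $V_{\lambda+\pi(\delta)}$ equals the dimension of its space of highest-weight vectors, i.e. the space of $B$-semi-invariant sections of weight $\lambda+\pi(\delta)$. Writing any such section relative to the canonical section as $s=f_\lambda\, q\, s_\delta$, with $f_\lambda$ a fixed semi-invariant function of weight $\lambda$ and $q\in\mathbb{C}(X)^B\simeq\mathbb{C}(\mathbb{P}^1)$, I would first check that $s$ ranges over exactly this family (its weight being $\lambda+\pi(\delta)$), so that the sought multiplicity is the dimension of the space of admissible $q$.

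Next I would translate the effectivity condition $\divi s=\divi f_\lambda+\divi q+\delta\geqslant 0$ into conditions on $q$, evaluated divisor by divisor. Using $\ord_{D_i}(f_\lambda q)=\langle v_i,\lambda\rangle+h_i\,\ord_{z_i}(q)$ from the triple $(v_i,z_i,h_i)$, the requirement $\ord_{D_i}(f_\lambda q)+m_i\geqslant 0$ splits into two types. For divisors with $h_i=0$ it reads $\langle v_i,\lambda\rangle\geqslant -m_i$ and is independent of $q$; collecting these over all such $i$ gives exactly the polytope condition $\lambda\in\mathcal{P}(\delta)$, which is therefore necessary for a nonzero section to exist. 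For divisors with $h_i\neq 0$, the inequality becomes $\ord_{z_i}(q)\geqslant -(\langle v_i,\lambda\rangle+m_i)/h_i$, and since $\ord_{z_i}(q)\in\mathbb{Z}$, the identity $\lceil -x\rceil=-\lfloor x\rfloor$ makes this equivalent to $\ord_{z_i}(q)\geqslant -\big[(\langle v_i,\lambda\rangle+m_i)/h_i\big]$, with $[\cdot]$ the integer part. Grouping the divisors lying over a common point $z\in\mathbb{P}^1$, the binding constraint is the smallest of these bounds, which is precisely $\ord_z(q)\geqslant -m_z$.

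It then remains to count the $q\in\mathbb{C}(\mathbb{P}^1)$ satisfying $\ord_z(q)\geqslant -m_z$ for every $z\in\mathbb{P}^1$. Here the reduction noted before the theorem — that off a finite set of points there is a unique $B$-stable divisor with $v=0$, $h=1$ — guarantees that almost all $m_z$ vanish, so $D=\sum_{z} m_z\,[z]$ is a genuine divisor on $\mathbb{P}^1$. The admissible $q$ are exactly the global sections of $\mathcal{O}_{\mathbb{P}^1}(D)$, whose dimension by Riemann--Roch on the projective line is $\max(1+\deg D,0)=\max\big(1+\sum_z m_z,0\big)$. This yields the stated formula for $m(\delta,\lambda)$, with the convention that the multiplicity is $0$ when no admissible $q$ exists (in particular when $\lambda\notin\mathcal{P}(\delta)$, where some $h_i=0$ constraint already fails).

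The main obstacle is the careful bookkeeping at the finitely many ``special'' divisors and the passage between valuations on $X$ and on $\mathbb{P}^1$: one must justify that $\ord_D$ really decomposes as $\langle v_D,\lambda\rangle+h_D\,\ord_{z_D}$ on products $f_\lambda q$, that the floor operation faithfully captures the integrality of $\ord_z(q)$, and that the finitely many removed points contribute nothing beyond the family already described. Granting the structural description of $B$-stable divisors set up above, the remaining step is the Riemann--Roch computation on $\mathbb{P}^1$, which is routine.
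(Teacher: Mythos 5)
Your proposal is correct and is essentially the paper's own argument: the paper merely remarks that the proof is similar to that of Theorem~\ref{decomposition0}, i.e., one writes a $B$-semi-invariant section as $f_\lambda q\, s_\delta$ with $q \in \mathbb{C}(X)^B \simeq \mathbb{C}(\mathbb{P}^1)$, splits the effectivity conditions into the $h_i = 0$ ones (giving $\mathcal{P}(\delta)$) and the $h_i \neq 0$ ones (bounding $\ord_z q$ by the floors $m_z$), and counts the admissible $q$ via Riemann--Roch on $\mathbb{P}^1$. Your write-up fills in exactly this outline, including the correct handling of the integrality (floor) step and of the continuous family of divisors with $v_D = 0$, $h_D = 1$, so nothing needs to be changed.
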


The proof of this theorem is similar to the proof of Theorem \ref{decomposition0}.

\subsection{Examples}

\begin{example}
Let $G = Sp_n, \; n = 2l$. Consider the double flag variety $X = G/P \times G/Q$ corresponding to the pair $(1,2l-2,1)$, $(l,l)$ of parabolic subgroups. This is a variety of complexity $0$. Suppose $e_1, \dots, e_n$ is the standard basis of $\mathbb{C}^n$,  $\epsilon_i$ are the weights of the vectors $e_i$ with respect to the  diagonal maximal torus $T$, $\omega_i = \epsilon_1 + \dots + \epsilon_i$ are the fundamental weights. Denote by $\ell$ and $S$ a line and an $l$-dimensional subspace corresponding to points of $G/P$ and $G/Q$. Denote by $x_i$ and $y_{i_1,\dots, i_l}$ the Pl\"{u}cker coordinates on $G/P$ and $G/Q$. Denote by  $E_k$ a $B$-stable subspace $\langle e_1, \dots, e_k \rangle$.

Here is a list of $B$-stable prime divisors $D_i$ (determined by geometric conditions on $\ell$, $S$), their equations $F_i$ in Pl\"{u}cker coordinates, degrees and weights of $F_i$:

\noindent
\begin{tabular}{|l|l|l|l|l|}
\hline

$D_1$ & $\ell \subset E_{n-1}$ & $x_n$ & $(1, 0)$ & $\omega_1$ \\ \hline

$D_2$ & $S \cap E_l \neq 0$ & $y_{l+1,\dots,n}$ & $(0,1)$ & $\omega_l$ \\ \hline

$D_3$ & $(S+\ell) \cap E_{l-1} \neq 0$ & $\sum \limits_{i \geqslant l} (-1)^i x_i y_{l,\dots,\hat{i},\dots,n}$  &
$(1,1)$ &  $\omega_{l-1}$ \\ \hline

$D_4$ & $(S+\ell) \cap \ell^{\perp} \cap E_l \neq 0$ &
$\sum \limits_{i \leqslant l} (x_i y_{l+1,\dots,n} + \sum \limits_{j > l} (-1)^{l+j} x_j y_{i,l+1,\dots,\hat{j},\dots,n}) x_{n+1-i}$
& $(2,1)$ & $\omega_l$ \\ \hline

\end{tabular}

\smallskip

The points in the complement of these divisors belong to the open $B$-orbit. Indeed, assume that a point does not belong to $D_2$. Consider  the matrix whose columns are the basis vectors of $S$. By choosing a basis we can assume that the lower $l \times l$ submatrix is the identity matrix.
We can make other matrix entries equal to zero by the action of $B$. Suppose the point in addition does not belong to $D_1$. Then the lowest entry of the column generating $\ell$ is nonzero. Now we can make entries of this column at positions $l+1, \dots, n-1$ equal to zero. Suppose the point in addition does not belong to $D_3$. Then the $l$-th entry of the column generating $\ell$ is nonzero. By the action of $B$ we can make entries at positions $2,\dots,l-1$ equal to zero. Suppose the point does not belong to $D_4$. Then the $1$-st entry in the column is nonzero. By the action of $B$ we can make these three nonzero entries equal to $1$, i.e., now the point has the unique canonical form.

Up to a scalar multiple, $B$-semi-invariant functions are ratios of products of $F_i$ such that degrees in every group of Pl\"{u}cker coordinates for the numerator and the denominator are equal. Hence we can find a lattice $\Lambda(X)$: it is generated by weights $\epsilon_1 - \epsilon_l$ and $\epsilon_1 + \epsilon_l$. We can take $f_{\epsilon_1 - \epsilon_l} = \frac{F_1 F_3}{F_4}$, $f_{\epsilon_1 + \epsilon_l} = \frac{F_1 F_2}{F_3}$ as basis weight functions.
In the basis dual to the weights of these weight functions, the vectors $v_{D_1} = (1,1)$, $v_{D_2} = (0,1)$, $v_{D_3} = (1,-1)$, $v_{D_4} = (-1,0)$ correspond to the divisors.

 Any divisor $\delta$ is equivalent to a linear combination of the preimages of Schubert divisors: $\delta = p D_1 + q D_2$. The space of sections of the line bundle $\mathcal{O}(\delta)$ is the tensor product of the spaces of sections $\mathcal{O}(p \pi_1(D_1))$ and $\mathcal{O}(q \pi_2(D_2))$, where $\pi_1$, $\pi_2$ are projections of $X$ to $G/P$, $G/Q$ and $\pi_1(D_1)$, $\pi_2(D_2)$ are Schubert divisors. The spaces of sections of $\mathcal{O}(p \pi_1(D_1))$ and $\mathcal{O}(q \pi_2(D_2))$ are isomorphic to $V_{p \omega_1}$ and $V_{q \omega_l}$, respectively. Thus for decomposing the product $V_{p \omega_1} \otimes V_{q \omega_l}$ it is sufficient to compute $H^0(G/P \times G/Q, \mathcal{O}(p D_1 + q D_2))$. The weight polytope is equal to $\mathcal{P}(\delta) = \{ \lambda = -a \epsilon_1 - b \epsilon_l \mid 0 \leqslant b \leqslant a \leqslant p, a+b \leqslant 2q \}$. Using Theorem \ref{decomposition0}, we obtain a decomposition:

$$V_{p \omega_1} \otimes V_{q \omega_l} = \bigoplus_{\substack{ 0 \leqslant b \leqslant a \leqslant p \\
a+b \leqslant 2q \\ a\equiv b (\mo 2)}}
 V_{(p+q-a)\epsilon_1 + q\epsilon_2 + \dots + q\epsilon_{l-1} + (q - b)\epsilon_l}.$$

\label{ex0}
\end{example}

\begin{example}
Let $G = SL_n$. Consider the double flag variety corresponding to the pair $(3, p_2)$, $(q_1,q_2,q_3)$ of parabolic subgroups. We assume that $q_1, q_2, q_3 \geqslant 3$. This is a variety of complexity $1$. We use notation similar to notation from Example \ref{ex0}. Assume that $\omega_0 = \omega_n = 0$. Note that $\epsilon_i^{\ast} = -\epsilon_{n+1-i}$.

Denote by $R_i$ and $S_j$ the subspaces corresponding to points in $G/P$ and $G/Q$, the lower index denotes the dimension of a subspace. Here is a list of $B$-stable prime divisors (determined by geometric conditions on $R_i$, $S_j$), degrees of their equations $F_i$ in Pl\"{u}cker coordinates, and weights of $F_i$:

\smallskip

\noindent
\begin{tabular} {|l|l|l|l|}
\hline
$D_1$ & $R_3 \cap E_{n-3} \neq 0$ & $(1, 0, 0)$ & $\omega^{\ast}_3$ \\ \hline

$D_2$ & $S_{q_1} \cap E_{n - q_1} \neq 0$ & $(0, 1, 0)$ & $\omega^{\ast}_{q_1}$ \\ \hline

$D_3$ & $S_{q_1 + q_2} \cap E_{n - (q_1+ q_2)} \neq 0$ & $(0, 0, 1)$ & $\omega^{\ast}_{q_1 + q_2}$\\ \hline

$D_{4,5,6}$ & $\langle R_3 \cap E_{n - 3 + k} + S_{q_1} \cap E_{n - 3 + k} \rangle \cap$ &
$(1, 1, 0)$ & $\omega^{\ast}_{3 - k} + \omega^{\ast}_{q_1 + k}$ \\
& \multicolumn{1}{r|} {$ \cap E_{n - q_1 - k} \neq 0$, \;  $k = 1,2,3$} && \\ \hline

$D_{7,8,9}$ & $\langle R_3 \cap E_{n - 3 + k} + S_{q_1 + q_2} \cap E_{n - 3 + k} \rangle \cap$
& $(1, 0, 1)$ & $\omega^{\ast}_{3 - k} + \omega^{\ast}_{(q_1 + q_2) + k}$\\
& \multicolumn{1}{r|} {$\cap E_{n - (q_1 + q_2) - k} \neq 0$, \;$k = 1,2,3$} && \\ \hline

$D_{10}$ & $\langle \langle R_3 \cap E_{n-1} + S_{q_1} \cap E_{n-1}
\rangle \cap E_{n - q_1 - 1} + $ & $(1, 1, 1)$ &
$\omega^{\ast}_1 + \omega^{\ast}_{q_1 + 1} + \omega^{\ast}_{(q_1 + q_2) + 1}$ \\
& \multicolumn{1}{r|} {$+ S_{q_1 + q_2} \cap E_{n-q_1-1} \rangle \cap E_{n-(q_1 + q_2) - 1} \neq 0$}&& \\ \hline

$D_{11}$ & $\langle \langle R_3 + S_{q_1} \rangle \cap E_{n - q_1 - 2} +
S_{q_1+q_2} \cap$ &
$(1, 1, 1)$ & $\omega^{\ast}_{q_1+ 2} + \omega^{\ast}_{(q_1+q_2) + 1}$ \\
& \multicolumn{1}{r|} {$\cap E_{n-q_1-2} \rangle \cap E_{n-(q_1+q_2)-1} \neq 0$} && \\ \hline

$D_{12}$ & $\langle \langle R_3 + S_{q_1} \rangle \cap E_{n - q_1 - 1} +
S_{q_1+q_2} \cap $ &
$(1, 1, 1)$ & $\omega^{\ast}_{q_1+ 1} + \omega^{\ast}_{(q_1+q_2) + 2}$\\
& \multicolumn{1}{r|} {$\cap E_{n-q_1-1} \rangle \cap E_{n-(q_1+q_2)-2} \neq 0$} && \\ \hline

$D_{13}$ & $\langle R_3 \cap E_{n-2} + \langle
R_3 + S_{q_1} \rangle \cap E_{n - q_1 - 2} + $ &
$(2, 1, 1)$ & $\omega^{\ast}_2 + \omega^{\ast}_{q_1+2} + \omega^{\ast}_{(q_1+q_2) + 2}$\\
& \multicolumn{1}{r|} {$+ S_{q_1+q_2} \cap E_{n-q_1-2} \rangle \cap E_{n-(q_1+q_2)-2} \neq 0$} && \\ \hline

$D(z)$ & $F_4 F_8 F_{11} - z F_5 F_7 F_{12}$  & $(3,2,2)$ & $\omega_1^{\ast} + \omega_2^{\ast} + \omega_{q_1+1}^{\ast} + \omega_{q_1+2}^{\ast} + $ \\
$z \in \mathbb{P}^1$
&&& \multicolumn{1}{r|}{$+  \omega_{q_1+q_2 +1}^{\ast} + \omega_{q_1+q_2+2}^{\ast}  $ } \\
$z \neq 0,1,\infty$ &&& \\ \hline

\end{tabular}

\medskip

Consider polynomials in $F_i$, $i = 1,\dots 13$, as polynomials in Pl\"{u}cker coordinates. The subspace of polynomials of weight $\omega_1^{\ast} + \omega_2^{\ast} + \omega_{q_1+1}^{\ast} + \omega_{q_1+2}^{\ast} + \omega_{q_1+q_2 +1}^{\ast} + \omega_{q_1+q_2+2}^{\ast}$ and multidegree $(3,2,2)$ has dimension $2$ and is linearly spanned by polynomials $F_4 F_8 F_{11}$, $F_5 F_7 F_{12}$, $F_{10} F_{13}$. Weight subspaces of lower degrees have dimension $1$. These three polynomials are linearly dependent. Multiplying $F_i$ by a scalar we may assume that the equation  of linear dependence is $F_4 F_8 F_{11} - F_5 F_7 F_{12} + F_{10} F_{13} = 0$.
We can regard the polynomials in this weight subspace as linear forms on $\mathbb{P}^1$ by taking $F_4 F_8 F_{11}$ and $F_5 F_7 F_{12}$ for homogeneous coordinates.

 The valuation corresponding to a divisor $D(z)$ has order $h=1$ and center at $z$. The vector $v_{D(z)}$ corresponding to this divisor equals zero.
The valuations corresponding to the divisors $D_4$, $D_8$ and $D_{11}$ have order $h = 1$ and center at $0$; the valuations corresponding to the divisors $D_5$, $D_7$ and $D_{12}$ have order $h = 1$ and center at $\infty$; the valuations corresponding to the divisors $D_{10}$ and $D_{13}$ have order $h = 1$ and center at $1$. For other divisors $D_i$ the corresponding valuations have order $h = 0$.

$B$-semi-invariant functions are constructed in the same way as in Example \ref{ex0}, but up to multiplication by a function from $\mathbb{C}(X)^B$. $B$-invariant functions are ratios of homogeneous polynomials of same degree in coordinates $F_4 F_8 F_{11}$ and $F_5 F_7 F_{12}$, i.e., the field $\mathbb{C}(X)^B$ is generated by the function $\frac{F_4 F_8 F_{11}}{F_5 F_7 F_{12}}$.

 The lattice $\Lambda(X)$ is generated by the weights $\epsilon_i - \epsilon_j$, where $i$ and $j$ are numbers from different triples  $(1,2,3)$, $(q_1+1,q_1+2,q_1+3)$, and $(q_1+q_2+1,q_1+q_2+2,q_1+q_2+3)$.

 We take the following functions as the basis weight functions:

 \begin{gather*}
\frac{F_4 F_{11}}{F_5 F_{10}} = f_{\epsilon^{\ast}_2 - \epsilon^{\ast}_1},
 \frac{F_1 F_{11}}{F_5 F_7} = f_{\epsilon^{\ast}_3 - \epsilon^{\ast}_1},
 \frac{F_{12}}{F_2 F_8} = f_{\epsilon^{\ast}_{q_1+1} - \epsilon^{\ast}_1},
 \frac{F_{11}}{F_{10}} = f_{\epsilon^{\ast}_{q_1+2} - \epsilon^{\ast}_1},
 \frac{F_6}{F_5} = f_{\epsilon^{\ast}_{q_1+3} - \epsilon^{\ast}_1}, \\
 \frac{F_{11}}{F_3 F_5} = f_{\epsilon^{\ast}_{q_1+q_2+1} - \epsilon^{\ast}_1},
 \frac{F_{12}}{F_{10}} = f_{\epsilon^{\ast}_{q_1+q_2+2} - \epsilon^{\ast}_1},
 \frac{F_9}{F_8} = f_{\epsilon^{\ast}_{q_1+q_2+3} - \epsilon^{\ast}_1}.
 \end{gather*}

\smallskip

Let $a_i$ be the coordinates of $\lambda \in \Lambda \otimes \mathbb{Q}$ in the basis of weights of the above $B$-semi-invariant functions. Let $\delta = m_1 D_1 + m_2 D_2 + m_3 D_3$. Then we obtain the following inequalities on coordinates defining the polytope $\mathcal{P}(\delta)$:

\noindent
 $$a_2 \geqslant -m_1, a_3 \leqslant m_2, a_6 \leqslant m_3, a_5 \geqslant 0, a_8 \geqslant 0,$$

\noindent
and the following decomposition:

\[   V_{m_1 \omega_3} \otimes V_{m_2 \omega_{q_1} + m_3
\omega_{q_1+q_2}} = \bigoplus m(\bar{a})V_{\lambda(\bar{a}, \bar{m})} ,\]

\noindent
where

\begin{multline*}
m(\bar{a}) = \max(0, 1 + \min(-a_1 -a_2 -a_5 -a_6, -a_2, a_3 + a_7) + \\
+ \min(a_1, -a_3 -a_8, a_1 + a_2 + a_4 +a_6) + \min(-a_1 - a_4 -a_7, 0)),
\end{multline*}

\noindent
\begin{multline*} \lambda(\bar{a}, \bar{m}) =
m_1 \omega_3 + m_2 \omega_{q_1} + m_3 \omega_{q_1+q_2} - (a_1 +
\dots + a_8) \epsilon_1 + a_1 \epsilon_2 + a_2 \epsilon_3 + \\ +
 a_3 \epsilon_{q_1+1} + a_4 \epsilon_{q_1+2} + a_5 \epsilon_{q_1+3} +
a_6 \epsilon_{q_1+q_2+1} + a_7 \epsilon_{q_1+q_2+2} + a_8 \epsilon_{q_1+q_2+3}, \end{multline*}

\noindent
and the sum ranges over all $a_i$ that satisfy the inequalities given above.

\end{example}

\section{Some theorems about complexity of double flag varieties}
Now we formulate some theorems. The theorem we need for computing the complexity of a double flag variety is due to Panyushev:

\begin{theorem}[\cite{Pan}]
 Suppose $P$ and $Q$ are decomposed into semidirect product of the standard Levi subgroup and the unipotent radical: $P = L \rightthreetimes P_u$, $Q = M \rightthreetimes Q_u$. Then the complexity of the action $G : G/P \times G/Q$ equals the complexity of the action  $L \cap M : \mathfrak{p}_u \cap \mathfrak{q}_u$, where $\mathfrak{p}_u$ and $\mathfrak{q}_u$ are the Lie algebras of  $P_u$ and $Q_u$.
\label{theorm_panushev}
\end{theorem}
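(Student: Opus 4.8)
\emph{Proof sketch.} The plan is to peel off the two unipotent radicals one at a time: first I would reduce the $G$-action on $X=G/P\times G/Q$ to an action of the Levi $L$, and then reduce that to the linear action of the reductive group $L\cap M$ on the vector space $\mathfrak p_u\cap\mathfrak q_u$.

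First I would use the projection to the first factor to present $X$ as the homogeneous bundle $X\cong G\times_P(G/Q)$ over the flag variety $G/P$, with fibre $G/Q$. By the local structure theorem for $G/P$, the open $B$-orbit is the big cell, and the isotropy group in $B$ of a point of this cell is, up to conjugation, a Borel subgroup $B_L=B\cap L$ of the Levi $L$. Since a generic $B$-orbit of the total space projects onto the dense $B$-orbit of the base $G/P$ and meets each fibre in a generic $B_L$-orbit, the codimension of a generic $B$-orbit in $X$ equals the codimension of a generic $B_L$-orbit in $G/Q$, that is
\[ c_G(X)=c_L(G/Q), \]
the complexity of $L$ acting on $G/Q$ (computed with respect to its Borel $B_L$). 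By the symmetry $P\leftrightarrow Q$ one also obtains $c_L(G/Q)=c_M(G/P)$, a useful consistency check.

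Next I would analyse the action of $L$ on $G/Q$. Since $B\subseteq Q$, the Borel $B_L\subseteq B$ lies in $Q$, so it fixes the base point $eQ$ and preserves the big cell $R_u(Q^-)\,eQ\cong\mathfrak q_u^-$ of $G/Q$ (here $Q^-$ is the opposite parabolic and $\mathfrak q_u^-$ the opposite nilradical). Thus $B_L$ acts on this affine space fixing the origin, with linear part the isotropy representation on $T_{eQ}(G/Q)\cong\mathfrak g/\mathfrak q\cong\mathfrak q_u^-$. Passing to the semisimplification of this $B_L$-module, the $L$-internal directions $\mathfrak q_u^-\cap\mathfrak l$ (the root spaces $\mathfrak g_{-\beta}$ with $\beta\in\Phi_L^+\setminus\Phi_M$) are swept out by the unipotent part of $B_L$, and what survives is the linear action of the reductive group $L\cap M$ on the span of the root spaces $\mathfrak g_{-\beta}$ with $\beta\in R_u(P)\cap R_u(Q)$; this is $\mathfrak p_u\cap\mathfrak q_u$ up to passing to the dual, which does not affect complexity. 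A dimension count confirms the bookkeeping: $\dim\mathfrak q_u^-=\#(R_u(P)\cap R_u(Q))+\#(\Phi_L^+\setminus\Phi_M)$, and the second summand is exactly the contribution of the unipotent part. This gives $c_L(G/Q)=c_{L\cap M}(\mathfrak p_u\cap\mathfrak q_u)$, and combined with the first step proves the theorem.

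The main obstacle is the second step: the $B_L$-action on the big cell is only triangular, not linear, so one must justify that replacing it by its linear (associated-graded) model does not change the dimension of a generic orbit, and one must verify precisely which root directions are absorbed by the unipotent part of $B_L$. A robust way to settle both points is to compute $c_G(X)$ through the cotangent bundle $T^*X=(G\times_P\mathfrak p_u)\times(G\times_Q\mathfrak q_u)$ and its moment map, or equivalently to exhibit the open $G$-orbit as $G/(P^-\cap Q)$, whose stabilizer has reductive part $L\cap M$ and whose normal data are governed by $\mathfrak p_u\cap\mathfrak q_u$; either viewpoint makes the surviving representation and the invariance of complexity transparent.
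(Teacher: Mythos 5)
The paper itself gives no proof of Theorem~\ref{theorm_panushev} to compare against: it is quoted from \cite{Pan}. Judged on its own, your first step is essentially correct: presenting $G/P\times G/Q$ as the bundle $G\times_P(G/Q)$ and invoking the local structure of $G/P$ does give $c_G(G/P\times G/Q)=c_L(G/Q)$. The only inaccuracy there is that the open $B$-orbit on $G/P$ is $Bw_0P/P$, not the big cell $P_u^-\cdot eP$ (which is the open $B^-$-orbit), so the generic isotropy group is a Borel subgroup of the conjugate Levi ${}^{w_0}L$; this is harmless, since complexity is independent of the choice of Borel.

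The genuine gap is in your second step, and it is the one you flag yourself; moreover two of the assertions you lean on are false, so it cannot be dismissed as a technicality. First, $B\cap L\subseteq Q$ fixes $eQ$ but does \emph{not} preserve the cell $Q_u^-\cdot eQ$: that cell is stable under $Q^-$, not under $Q$ (already for $G=SL_2$, $Q=B$, the group $B$ moves points of the opposite cell onto the $B$-fixed point at infinity), so you only have a rational, not a regular, action on $\mathfrak q_u^-$. Second, linearizing a non-reductive group action at a fixed point can change the generic orbit dimension: $\mathbb{G}_a$ acting on $\mathbb{A}^2$ by $t\cdot(x,y)=(x,\,y+tx^2)$ has one-dimensional generic orbits but trivial linearization at the origin; in general the linear model can only \emph{overestimate} the complexity, and the inequality you need is exactly the other one. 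Third, and decisively, your sweeping mechanism evaporates precisely upon semisimplification: for a positive root $\alpha$ of $L$ that is not a root of $M$, the linearized action of the root subgroup $U_\alpha\subseteq B\cap L$ on the direction $\mathfrak{g}_{-\alpha}$ is trivial, because $[\mathfrak{g}_{\alpha},\mathfrak{g}_{-\alpha}]\subseteq\mathfrak{t}\subseteq\mathfrak{q}$; the sweeping of the $\mathfrak{l}$-internal directions is a genuinely nonlinear effect, invisible in the isotropy module. The clean repair avoids linearization altogether: compute $c_L(G/Q)$ with the \emph{opposite} Borel $B^-\cap L$, which lies in $Q^-$ and therefore honestly preserves the cell $Q_u^-$. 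Its normal unipotent subgroup $V=L\cap Q_u^-$ acts on $Q_u^-$ by left translations, and from the decompositions $B^-\cap L=(B^-\cap L\cap M)\rightthreetimes V$ and $Q_u^-=V\cdot(P_u^-\cap Q_u^-)$ one reads off that a generic $(B^-\cap L)$-orbit is $V\times O$, where $O$ is a generic orbit of $B^-\cap L\cap M$ acting by conjugation on $P_u^-\cap Q_u^-$; conjugation on this unipotent group is linearized \emph{exactly} by the exponential map, and a Chevalley involution carries $\mathfrak{p}_u^-\cap\mathfrak{q}_u^-$ to $\mathfrak{p}_u\cap\mathfrak{q}_u$ without changing complexity. (Your fallback via the open $G$-orbit needs the same care: $P^-$ is not conjugate to $P$ unless $I=-w_0(I)$, so the open orbit is $G/({}^{w_0}P\cap Q)$, which differs from $G/(P^-\cap Q)$ by the diagram involution.)
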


\begin{lemma}
 The complexity does not change if we swap $P$ and $Q$.
\end{lemma}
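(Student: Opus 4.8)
The plan is to observe that complexity is an invariant of a $G$-variety under $G$-equivariant isomorphism, and then to exhibit such an isomorphism between $G/P \times G/Q$ and $G/Q \times G/P$ given by the obvious coordinate swap. First I would define the map
$$\sigma : G/P \times G/Q \longrightarrow G/Q \times G/P, \qquad \sigma(gP,\, hQ) = (hQ,\, gP),$$
which is plainly a well-defined isomorphism of algebraic varieties. Since $G$ acts diagonally on both factors, for any $x \in G$ one has $\sigma(x \cdot (gP, hQ)) = \sigma(xgP, xhQ) = (xhQ, xgP) = x \cdot \sigma(gP, hQ)$, so $\sigma$ is $G$-equivariant.

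The key step is then to transfer this to $B$. Because $B \subseteq G$, the map $\sigma$ is in particular $B$-equivariant, hence it carries $B$-orbits bijectively onto $B$-orbits while preserving their dimensions. Consequently the codimension of a generic $B$-orbit in $G/P \times G/Q$ equals the codimension of a generic $B$-orbit in $G/Q \times G/P$, which is exactly the assertion $c_G(G/P \times G/Q) = c_G(G/Q \times G/P)$. Equivalently, one may argue through the Rosenlicht description in the Remark: $\sigma$ induces an isomorphism of fields $\mathbb{C}(G/Q \times G/P) \cong \mathbb{C}(G/P \times G/Q)$ which, by $B$-equivariance, restricts to an isomorphism of the subfields of $B$-invariants, so the two transcendence degrees over $\mathbb{C}$ coincide.

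There is essentially no obstacle here; the only thing to verify is the $B$-equivariance of $\sigma$, and this is immediate from its $G$-equivariance together with $B \subseteq G$. I note that one could alternatively deduce the statement from Theorem \ref{theorm_panushev}, since swapping $P$ and $Q$ interchanges $L \leftrightarrow M$ and $\mathfrak{p}_u \leftrightarrow \mathfrak{q}_u$ while leaving the action $L \cap M : \mathfrak{p}_u \cap \mathfrak{q}_u$ literally unchanged; however, the direct equivariant-isomorphism argument is cleaner and avoids invoking that heavier result.
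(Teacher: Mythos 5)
Your proof is correct: the coordinate-swap map $\sigma$ is indeed a $G$-equivariant (hence $B$-equivariant) isomorphism, so generic $B$-orbit codimensions agree. The paper states this lemma without proof, treating it as evident, and your argument is exactly the standard justification being implicitly relied upon; nothing further is needed.
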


\begin{lemma}
Let $P' \subseteq P$, $Q' \subseteq Q$ be parabolic subgroups. Then the complexity of the double flag variety for the pair $(P', Q')$ is not less than for the pair $(P, Q)$.
\label{subgroup}
\end{lemma}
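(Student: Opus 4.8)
The plan is to exploit the natural $G$-equivariant projection relating the two double flag varieties and to compare their fields of $B$-invariant rational functions, using the Rosenlicht description $c(X) = \trdeg \; \mathbb{C}(X)^B/\mathbb{C}$ recorded in the Remark above.

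First I would observe that the inclusions $P' \subseteq P$ and $Q' \subseteq Q$ induce $G$-equivariant surjective morphisms $G/P' \to G/P$ and $G/Q' \to G/Q$, given by $gP' \mapsto gP$ and $gQ' \mapsto gQ$. Taking their product yields a morphism
\[
\pi : G/P' \times G/Q' \longrightarrow G/P \times G/Q
\]
which is $G$-equivariant with respect to the diagonal $G$-actions, and in particular $B$-equivariant. Write $X' = G/P' \times G/Q'$ and $X = G/P \times G/Q$.

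Next, since each factor is surjective the map $\pi$ is surjective, hence dominant, so the comorphism $\pi^{\ast}$ is an injective homomorphism of fields $\mathbb{C}(X) \hookrightarrow \mathbb{C}(X')$. Because $\pi$ is $B$-equivariant, the pullback of any $B$-invariant rational function on $X$ is again $B$-invariant on $X'$, so $\pi^{\ast}$ restricts to an injection $\mathbb{C}(X)^B \hookrightarrow \mathbb{C}(X')^B$. Taking transcendence degrees over $\mathbb{C}$ and invoking the Rosenlicht formula yields
\[
c(X) = \trdeg \; \mathbb{C}(X)^B/\mathbb{C} \leqslant \trdeg \; \mathbb{C}(X')^B/\mathbb{C} = c(X'),
\]
which is exactly the asserted inequality.

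The argument is essentially formal, so I do not expect a genuine obstacle; the two points deserving care are (i) that $\pi$ is dominant, which is immediate from the surjectivity of each factor, and (ii) that $\pi^{\ast}$ carries $B$-invariants to $B$-invariants, which is precisely the $B$-equivariance of $\pi$. Once these are in place the transcendence-degree comparison is automatic. If one prefers a geometric formulation, the same conclusion follows by noting that $\pi$ maps a generic $B$-orbit in $X'$ dominantly onto a generic $B$-orbit in $X$, so the generic $B$-orbit in $X'$ has codimension at least that of the generic $B$-orbit in $X$.
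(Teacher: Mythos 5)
Your proof is correct and follows essentially the same route as the paper: both hinge on the $G$-equivariant surjection $G/P' \times G/Q' \to G/P \times G/Q$, with the paper concluding directly by comparing codimensions of generic $B$-orbits while you make the same comparison rigorous via the Rosenlicht formula $c(X) = \trdeg \; \mathbb{C}(X)^B/\mathbb{C}$ and the induced inclusion of $B$-invariant function fields.
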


\begin{proof}  There exists a $G$-equivariant surjective morphism $G/P' \times G/Q' \rightarrow G/P \times G/Q$. So the codimension of a general $B$-orbit on $G/P \times G/Q$ is not greater that the corresponding codimension on $G/P' \times G/Q'$.
\end{proof}

\begin{lemma}
$ c \geqslant \frac{1}{2}(\dim G - \dim L - \dim M - \dim T) $, where $c$ is the complexity of the action.
\label{complexity}
\end{lemma}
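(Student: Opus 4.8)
The plan is to bound the complexity directly on $X = G/P \times G/Q$, working from the definition of $c$ as the codimension of a generic $B$-orbit, rather than passing through the reduction of Theorem~\ref{theorm_panushev}. The single elementary input is that every $B$-orbit in $X$ has dimension at most $\dim B$, while a generic orbit attains the maximal orbit dimension; hence
$c = \dim X - \max_{x} \dim(B \cdot x) \geqslant \dim X - \dim B$.
So it suffices to compute $\dim X$ and $\dim B$ and observe that this one-sided estimate already equals the asserted right-hand side.

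First I would record the two dimension formulas. Since $P$ is parabolic with Levi factor $L$, the flag variety $G/P$ has dimension $\dim \mathfrak{g}/\mathfrak{p} = \dim \mathfrak{p}_u$; and because $\mathfrak{g} = \mathfrak{p}_u^{-} \oplus \mathfrak{l} \oplus \mathfrak{p}_u$ with $\dim \mathfrak{p}_u = \dim \mathfrak{p}_u^{-}$, this gives $\dim G/P = \tfrac12(\dim G - \dim L)$; likewise $\dim G/Q = \tfrac12(\dim G - \dim M)$. For the Borel, writing $\mathfrak{g} = \mathfrak{u}^{-} \oplus \mathfrak{t} \oplus \mathfrak{u}$ with $\dim \mathfrak{u} = \dim \mathfrak{u}^{-}$ gives $\dim B = \dim \mathfrak{t} + \dim \mathfrak{u} = \tfrac12(\dim G + \dim T)$.

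Combining, $\dim X = \dim G/P + \dim G/Q = \tfrac12(\dim G - \dim L) + \tfrac12(\dim G - \dim M)$, and subtracting $\dim B$ yields $\dim X - \dim B = \tfrac12(\dim G - \dim L - \dim M - \dim T)$, which is exactly the claimed lower bound for $c$.

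The computation is entirely routine; the only points that need care are the standard facts that (i) the generic $B$-orbit has maximal dimension among all orbits, so that its codimension is $\dim X$ minus the maximal orbit dimension, and (ii) the parabolic identity $\dim G/P = \tfrac12(\dim G - \dim L)$. I expect no genuine obstacle here: because the estimate is one-sided, no analysis of the actual generic orbit or of its stabilizer is required. (It is worth noting that attempting the same bound through Theorem~\ref{theorm_panushev}, by estimating the generic orbit of $L \cap M$ on $\mathfrak{p}_u \cap \mathfrak{q}_u$ by the dimension of a Borel of $L \cap M$, yields a strictly weaker inequality; it is precisely working on $X$ directly that makes the naive orbit bound sharp enough to give the stated result.)
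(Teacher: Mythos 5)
Your main argument is correct, and it takes a genuinely different route from the paper's. The paper proves this lemma \emph{through} the reduction of Theorem~\ref{theorm_panushev}: it applies the same naive bound (orbit dimension is at most the dimension of the acting Borel) but to the reduced action $L \cap M : \mathfrak{p}_u \cap \mathfrak{q}_u$, writing $c \geqslant \dim(\mathfrak{p}_u \cap \mathfrak{q}_u) - \dim(B \cap L \cap M)$, and then substitutes $\dim(\mathfrak{p}_u \cap \mathfrak{q}_u) = \tfrac12\bigl(\dim G - \dim L - \dim M + \dim(L \cap M)\bigr)$ and $\dim(B \cap L \cap M) = \tfrac12\bigl(\dim(L \cap M) + \dim T\bigr)$; the $\dim(L \cap M)$ terms cancel and the stated inequality follows. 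You instead bound the generic $B$-orbit on $X = G/P \times G/Q$ itself, via $c \geqslant \dim X - \dim B$, using $\dim G/P = \tfrac12(\dim G - \dim L)$, $\dim G/Q = \tfrac12(\dim G - \dim M)$, $\dim B = \tfrac12(\dim G + \dim T)$. All of these computations are right, and your version is slightly more self-contained: it does not invoke Theorem~\ref{theorm_panushev} at all. (A small remark: you do not even need the fact that the generic orbit has maximal dimension; whatever the dimension of the generic orbit is, it is at most $\dim B$, which already gives $c \geqslant \dim X - \dim B$.)

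However, your closing parenthetical claim is false: running the same bound through Theorem~\ref{theorm_panushev} does \emph{not} give a strictly weaker inequality --- it gives exactly the same one, and that is precisely the paper's proof. The two deficits coincide:
\[
\dim(\mathfrak{p}_u \cap \mathfrak{q}_u) - \dim(B \cap L \cap M)
= \tfrac12\bigl(\dim G - \dim L - \dim M - \dim T\bigr)
= \dim X - \dim B,
\]
because the $\dim(L \cap M)$ contribution to $\dim(\mathfrak{p}_u \cap \mathfrak{q}_u)$ cancels against the one in $\dim(B \cap L \cap M)$. This error does not affect the validity of your argument, but the contrast you draw with the reduction-based method is mistaken: the two approaches are equally sharp here, and they differ only in where the elementary orbit bound is applied.
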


\begin{proof}
It is easy to see that $c \geqslant  \dim(\mathfrak{p}_u \cap \mathfrak{q}_u) - \dim(L \cap M \cap B)$. Besides, we have $\dim(L \cap M \cap B) = \frac{1}{2}(\dim (L \cap M) + \dim T)$, $\dim(\mathfrak{p}_u \cap \mathfrak{q}_u) = \frac{1}{2}(\dim G - \dim L - \dim M + \dim(L \cap M)$. Substituting these equalities in the first inequality, we prove the lemma.
\end{proof}

\section{Case of classical matrix groups}

In this section $G$ denotes $SL_n$, $SO_n$, or $Sp_n$. We assume that a Borel subgroup $B \subseteq G$ consists of upper-triangular matrices, that $SO_n$ preserves the quadratic form with the matrix

$$\begin{pmatrix}
 \raisebox{-0.7em}{\parbox[t][0.1\height][t]{0.5em}{\Huge{0}}} && 1 \\
  & \udots &
  \\ 1 && \parbox[b][0.1\height]{0.7em}{\Huge{0}}
\end{pmatrix},$$

\noindent
and that $Sp_n$ preserves the skew-symmetric bilinear form with the matrix

$$\begin{pmatrix}
\raisebox{-1.5em}{\parbox[t][0.1\height][t]{0.5em}{\Huge{0}}} &&&&& 1 \\
&&&& \udots  & \\
&&& 1 && \\
&& -1 &&& \\
& \udots &&&& \\
-1 &&&&& \raisebox{0.5em}{\parbox[b][0.1\height][b]{0.9em}{\Huge{0}}}  \\
\end{pmatrix}.$$

For computing the complexity we use Theorem~\ref{theorm_panushev}. Now we describe Levi subgroups, Lie algebras of unipotent radicals and their intersections.

The Levi subgroup $L$ (or $M$) consists of block-diagonal matrices; for groups $SO_n$ and $Sp_n$ the sizes of these blocks are symmetric with respect to the secondary diagonal and the matrices standing at symmetric places are $A$ and ${(A^{S})}^{-1}$ (here $S$ denotes the transposition with respect to the secondary diagonal) and the central block (it exists if the number of blocks is odd) is an orthogonal or symplectic matrix respectively.

The Lie algebra $\mathfrak{sl}_n$ consists of matrices with trace $0$; the Lie algebra $\mathfrak{so}_n$ in the chosen  basis consists of matrices which are antisymmetric with respect to the secondary  diagonal; the Lie algebra $\mathfrak{sp}_n$ in the chosen basis consists of the following matrices: divide a matrix into $4$ equal square parts, then the upper right part and the lower left part are symmetric with respect to the secondary diagonal, the other two parts are antisymmetric to each other with respect to the secondary diagonal. Matrices in the Lie algebra of the unipotent radical $\mathfrak {p}_u$ (or $\mathfrak {q}_u$) have zeroes below the diagonal and in diagonal blocks.

For $SO_n$ with even $n$ there exists another class of parabolic subgroups (we call them \emph{special}). We can obtain these subgroups from block-triangular parabolic subgroups without central diagonal block by conjugation with transposition of two middle basis vectors. We consider special parabolic subgroups separately.

Matrices from $L \cap M$ consist of several diagonal square blocks. We denote these blocks by $A_1, \dots ,A_r$ and their sizes by $k_1, \dots ,k_r$. Besides, for $SO_n$ and $Sp_n$ we have a relation $k_i=k_{r+1-i}$. Note that for  $SO_n$ a middle pair of blocks of sizes $1$ and $1$ is the same as one middle block of size $2$. Further, we assume that parabolic subgroups are not special. We consider the case of special subgroups  separately. Matrices from $\mathfrak{p}_u \cap \mathfrak{q}_u$ consist of submatrices $X_{ij}$, where the matrix $X_{ij}$ is of size $k_i \times k_j$ and stands at the intersection of rows passing through  $A_i$ and columns passing through $A_j$. Besides, $X_{ij}=0$ if $i \geqslant  j$ or if there exists a matrix from $L$ or $M$ with nonzero entries at the place of $X_{ij}$. By ``blocks'' we often mean \textsl{nonzero} matrices $X_{ij}$. A Borel subgroup in $L \cap M$ is $L \cap M \cap B$, i.e., the intersection of $L \cap M$ with upper triangular matrices. The group $L \cap M \cap B$ acts on $\mathfrak{p}_u \cap \mathfrak{q}_u$ by conjugation; matrices $X_{ij}$ are transformed to $A_i X_{ij} A_j^{-1}$.

The idea is to consider all possible locations of blocks $X_{ij}$ and to compute complexity for all sizes of blocks for each location. We need the following lemmas to simplify the case-by-case considerations and  to reduce the number of possible cases.

\begin{lemma}

The complexity does not change if we transpose simultaneously $P$ and $Q$ with respect to the secondary diagonal.
\end{lemma}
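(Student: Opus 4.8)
The plan is to realize the simultaneous secondary transposition as an automorphism of $G$ and then to exploit the fact that complexity, being the codimension of a generic $B$-orbit, is invariant under automorphisms preserving $B$. Write $g^S = W g^{\mathsf T} W$, where $W$ is the permutation matrix with $1$'s on the secondary diagonal (so $W^2 = I$). Since $(gh)^S = h^S g^S$, the map $g \mapsto g^S$ is an anti-automorphism of $GL_n$, and hence
\[ \sigma(g) = (g^S)^{-1} \]
is a regular automorphism of $GL_n$. I would first check that $\sigma$ restricts to an automorphism of $G$ in each of the three cases: for $SL_n$ one has $\det(g^S) = \det g$, so $\det \sigma(g) = (\det g)^{-1} = 1$; for $SO_n$ and $Sp_n$ the secondary transpose is, up to the reversal $W$, the adjoint with respect to the defining antidiagonal form, and a short computation shows that $\sigma$ is the identity on $SO_n$ and conjugation by the diagonal matrix $\mathrm{diag}(1,\dots,1,-1,\dots,-1)$ on $Sp_n$; in either case $\sigma$ preserves $G$.

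Next I would record the two facts that make $\sigma$ do what we want. First, $\sigma(B) = B$: the secondary transpose preserves upper-triangularity (the entry of $g^S$ in position $(i,j)$ is the entry of $g$ in position $(n+1-j,\,n+1-i)$), and so does inversion. Second, $\sigma(P) = P^S$ and $\sigma(Q) = Q^S$, where $P^S = \{ g^S : g \in P \}$ is precisely the parabolic obtained from $P$ by transposing it with respect to the secondary diagonal: running $g$ over the group $P$ we have $(g^S)^{-1} = (g^{-1})^S$, and $g^{-1}$ again runs over $P$. For block-triangular $P$ with block sizes $(p_1,\dots,p_k)$ the subgroup $P^S$ is block-triangular with the reversed sizes $(p_k,\dots,p_1)$, which is the content of the transposition in the $SL_n$ case (for $SO_n$ and $Sp_n$ the block sizes are already symmetric and $P^S = P$).

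Finally I would conclude as follows. An automorphism $\sigma$ of $G$ induces an isomorphism of varieties
\[ \Phi : G/P \times G/Q \longrightarrow G/P^S \times G/Q^S, \qquad (gP,\, hQ) \mapsto (\sigma(g)P^S,\, \sigma(h)Q^S), \]
and from $\Phi(b \cdot x) = \sigma(b) \cdot \Phi(x)$ together with $\sigma(B) = B$ it follows that $\Phi$ carries $B$-orbits bijectively onto $B$-orbits, the correspondence being intertwined by the automorphism $\sigma|_B$ of $B$. Since an isomorphism of varieties preserves dimensions, the codimension of a generic $B$-orbit is the same on both sides, that is, $c(G/P \times G/Q) = c(G/P^S \times G/Q^S)$. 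Equivalently, one may invoke Theorem~\ref{theorm_panushev} and observe that $d\sigma$ restricts to a linear isomorphism $\mathfrak{p}_u \cap \mathfrak{q}_u \to \mathfrak{p}_u^S \cap \mathfrak{q}_u^S$ intertwining the actions of $L \cap M$ and $\sigma(L \cap M)$.

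The main obstacle is the verification, promised in the first paragraph, that $\sigma$ genuinely maps $G$ into $G$ in the orthogonal and symplectic cases, since the secondary transpose is tied to the specific antidiagonal form fixed at the start of Section~4; once $\sigma$ is known to be an automorphism of $G$ fixing $B$, everything else is formal.
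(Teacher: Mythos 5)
Your proof is correct. Note, however, that the paper offers no proof of this lemma at all -- it is stated bare, followed only by the remark that it ``gives simplification only for $SL_n$'' -- so your argument fills a gap rather than parallels anything. The route you take is the natural one: the map $\sigma(g) = (g^S)^{-1} = W (g^{-1})^{\mathsf T} W$ is a genuine automorphism of $GL_n$ (indeed, for $SL_n$ it is the standard outer automorphism composed with conjugation by the longest permutation), it fixes the upper-triangular Borel $B$, and it carries each standard parabolic to its secondary transpose since $\sigma(P) = \{(g^{-1})^S : g \in P\} = P^S$. Your verifications in the orthogonal and symplectic cases are also right: for the antidiagonal form one gets $g^S = g^{-1}$ on $SO_n$, so $\sigma = \mathrm{id}$ there, and on $Sp_n$ one gets $\sigma(g) = DgD^{-1}$ with $D$ the $\pm 1$ diagonal matrix, which preserves $B$ and every standard parabolic; in both cases $P^S = P$, which is exactly why the paper's remark says the lemma is useful only for $SL_n$ -- a point your write-up makes explicit. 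The concluding step, that a variety isomorphism intertwining the $B$-actions through the automorphism $\sigma|_B$ maps generic $B$-orbits to generic $B$-orbits of the same dimension, is sound, as is the alternative via Theorem~\ref{theorm_panushev}. The only thing I would tighten is the phrase ``a short computation shows'': since you correctly identify this as the crux for $SO_n$ and $Sp_n$, the two-line computations ($g^{\mathsf T} W g = W \Rightarrow g^S = g^{-1}$, and the analogous one with the symplectic form $J = WD$) should be displayed rather than promised.
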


\begin{note}
This lemma gives simplification only for $SL_n$.
\end{note}

\begin{lemma}
Consider an action, obtained from the original action by one of the following operations (or their combination):

\begin{itemize}
\item remove some blocks  $X_{ij}$ (i.e., we assume that some $X_{ij}$ are equal to $0$),

\item remove some matrices $A_i$ and blocks $X_{ij}$ in corresponding rows and columns.
\end{itemize}

\noindent
Then the complexity for the new action  cannot be greater than for the original one.
In other words, we consider only ``a part of an action''.
\label{subblocks}
\end{lemma}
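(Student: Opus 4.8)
The plan is to reduce both operations to the principle already used in the proof of Lemma~\ref{subgroup}: if $\phi\colon X \to Y$ is a dominant $H$-equivariant morphism of irreducible $H$-varieties, then $c_H(Y) \leqslant c_H(X)$, because the pullback $\phi^{\ast}$ embeds $\mathbb{C}(Y)^{B_H}$ into $\mathbb{C}(X)^{B_H}$ and hence the transcendence degree cannot increase. Here the relevant group is $H = L \cap M$ with Borel subgroup $L \cap M \cap B$, acting on $V = \mathfrak{p}_u \cap \mathfrak{q}_u$ by conjugation, $X_{ij} \mapsto A_i X_{ij} A_j^{-1}$.

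First I would treat the removal of blocks. Since each $X_{ij}$ is transformed only into itself, every block is an $(L\cap M)$-submodule of $V$, and $V$ is their direct sum. Discarding a collection of blocks amounts to the $(L \cap M)$-equivariant projection $\pi\colon V \to V'$ onto the sum $V'$ of the retained blocks; this projection is surjective, hence dominant, so $c_{L\cap M}(V') \leqslant c_{L\cap M}(V)$ by the principle above. This settles the first operation.

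Next I would handle the removal of matrices $A_i$ together with the blocks in the corresponding rows and columns. Let $I$ be the set of deleted indices; first discard, as in the previous paragraph, all blocks $X_{jk}$ with $j \in I$ or $k \in I$, obtaining a space $V''$ with $c_{L\cap M}(V'') \leqslant c_{L\cap M}(V)$. By construction no block of $V''$ involves an index from $I$, so the factors $A_i$ with $i \in I$ act trivially on $V''$; thus the action of $L \cap M$ on $V''$ factors through the smaller Levi subgroup $L' \cap M'$ obtained by deleting these factors, and the projection $L \cap M \to L' \cap M'$ carries $L\cap M \cap B$ onto $L'\cap M' \cap B$. The orbits of $L\cap M\cap B$ and of $L'\cap M'\cap B$ on $V''$ therefore coincide, so $c_{L'\cap M'}(V'') = c_{L\cap M}(V'')$, and combining the two inequalities gives $c_{L'\cap M'}(V'') \leqslant c_{L\cap M}(V)$. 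An arbitrary combination of the two operations is obtained by applying these steps in succession.

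The only point requiring care is the second operation for $SO_n$ and $Sp_n$: there the Levi factors are linked in symmetric pairs $(A_i, A_{r+1-i})$ and the relation $k_i = k_{r+1-i}$ must be respected, so indices are deleted in symmetric pairs and the removed factors generate a subgroup of $L\cap M$ that indeed acts trivially on the retained blocks. The global determinant constraint for $SL_n$ and the central orthogonal or symplectic block are harmless, since they do not affect the dimension of a generic orbit of $L \cap M \cap B$. I expect this bookkeeping --- verifying that the dropped factors act trivially and that the induced map of Borel subgroups is surjective --- to be the main, though essentially routine, obstacle.
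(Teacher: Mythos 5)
Your treatment of the first operation is correct and is in substance what the paper does: the paper phrases it as restriction to an invariant subspace (citing \cite{complexity_subvariety}), while you phrase it as the equivariant projection onto the complementary summand; since the blocks are direct summands of the $(L\cap M)$-module $\mathfrak{p}_u\cap\mathfrak{q}_u$, the two readings coincide, and your function-field argument is exactly the one used for Lemma~\ref{subgroup}. Your outline of the second operation also matches the paper's one-line ``quotient representation'' argument.

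The gap is in the second operation, and it sits precisely at the point you dismiss as harmless: the determinant constraint for $SL_n$. There $L\cap M$ consists of block-diagonal matrices with \emph{total} determinant $1$, so its projection onto the retained factors is surjective onto the full product $\prod_{i\notin I} GL_{k_i}$ (a deleted block can absorb any determinant), and this image strictly contains $L'\cap M'$, which is cut out inside the smaller $SL_{n'}$ by $\prod_{i\notin I}\det A_i=1$; the same applies to the Borel subgroups. So your claim that $L\cap M\cap B$ maps \emph{onto} $L'\cap M'\cap B$ is false, and this matters for the direction of the inequality: since $L'\cap M'\cap B$ is a proper subgroup of the image of $L\cap M\cap B$, orbit containment gives only $c_{L'\cap M'}(V'')\geqslant c_{L\cap M}(V'')$, the opposite of what your chain of inequalities needs. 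The statement you actually need --- that the constraint does not change generic orbits --- is true but requires an argument: the extra one-parameter central subgroup (scalars in one retained block) acts on $V''$ by the characters $\lambda_i\lambda_j^{-1}$, which all lie in the sum-zero hyperplane of the character lattice, while $(k_1,\dots,k_{r'})$ does not; hence one can find a one-parameter subgroup of the determinant-constrained central torus acting on every retained block identically, so the orbits do coincide. Alternatively, one can sidestep the issue by replacing $SL_n$ with $GL_n$ throughout (the center acts trivially on $G/P\times G/Q$, so all complexities of double flag varieties are unchanged, and Theorem~\ref{theorm_panushev} then produces unconstrained Levi intersections), after which your surjectivity claim is literally true. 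Note finally that you located the delicate case backwards: for $SO_n$ and $Sp_n$ the paired factors $(A,(A^S)^{-1})$ impose no determinant relation, so there the projection genuinely is onto; it is only $SL_n$ where care is needed. (The paper's own two-sentence proof glosses over this identification as well.)
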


\begin{proof}
The first operation corresponds to the restriction of an action to a $G$-stable subspace. The complexity of the action on a $G$-stable subvariety cannot be greater than the complexity of the action on the initial variety \cite{complexity_subvariety}.

The second operation corresponds to considering a quotient representation for which the complexity can be only less or equal than the original complexity.
\end{proof}

\begin{lemma}
Suppose there are $4$ nonzero matrices $X_{pq}$ standing at vertices of a rectangle, i.e., they have indices $ij$, $il$, $kj$ and $kl$. We require that these matrices do not stand on the secondary diagonal for $SO_n$. Then there is a rational invariant for the action of  $B \cap L \cap M$. We call this invariant the invariant of type ``square''.
\label{square}
\end{lemma}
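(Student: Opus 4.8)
The plan is to exhibit an explicit nonconstant rational function in the matrix entries of the four blocks that is invariant under the conjugation action of $B \cap L \cap M$. Recall that $B \cap L \cap M$ consists of tuples $(A_1, \dots, A_r)$ of invertible upper-triangular matrices, and that the block $X_{ij}$ transforms as $X_{ij} \mapsto A_i X_{ij} A_j^{-1}$. The whole difficulty is to neutralize not only the diagonal torus but also the unipotent part of each $A_i$; the idea is to track a single matrix entry of each block that is insensitive to the unipotent part.

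First I would single out, for each of the four blocks, its bottom-left corner entry $x_{ij} := (X_{ij})_{k_i, 1}$. Since $A_i$ is upper-triangular of size $k_i$, its last row is $(A_i)_{k_i, k_i}$ times the last coordinate covector, so the last row of $A_i X_{ij}$ is $(A_i)_{k_i, k_i}$ times the last row of $X_{ij}$; dually, since $A_j^{-1}$ is upper-triangular, its first column is $(A_j)_{11}^{-1}$ times the first coordinate vector, so the first column of $X_{ij} A_j^{-1}$ is $(A_j)_{11}^{-1}$ times the first column of $X_{ij}$. Combining, the corner entry transforms by a character,
\[ x_{ij} \longmapsto \frac{(A_i)_{k_i, k_i}}{(A_j)_{11}}\, x_{ij}, \]
and crucially this depends only on two diagonal entries of the acting matrices, never on their off-diagonal (unipotent) parts.

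Next I would form the \emph{square} expression
\[ I = \frac{x_{ij}\, x_{kl}}{x_{il}\, x_{kj}}. \]
Applying the four characters above, the multiplier acquired by the numerator is $\frac{(A_i)_{k_i, k_i}(A_k)_{k_k, k_k}}{(A_j)_{11}(A_l)_{11}}$ while that acquired by the denominator is $\frac{(A_i)_{k_i, k_i}(A_k)_{k_k, k_k}}{(A_l)_{11}(A_j)_{11}}$; these coincide, so $I$ is invariant under all of $B \cap L \cap M$, unipotent part included. Being a genuine ratio of coordinate functions on $\mathfrak{p}_u \cap \mathfrak{q}_u$, it is nonconstant, which is exactly what is claimed.

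The one point that requires care — and the reason for the hypothesis excluding the secondary diagonal in the $SO_n$ case — is that the four corner entries must be free, generically nonzero coordinates. For a block lying strictly off the secondary diagonal this holds, since $x_{ij}$ is then an unconstrained entry of $\mathfrak{p}_u \cap \mathfrak{q}_u$. For $SO_n$, however, a block $X_{ij}$ sitting on the secondary diagonal is antisymmetric with respect to the secondary diagonal, and its bottom-left corner lies exactly on the block's own secondary diagonal, hence is forced to vanish; then $I$ would be identically zero or undefined. Excluding such blocks removes this degeneracy, whereas for $Sp_n$ the analogous blocks are symmetric rather than antisymmetric, so the corner entry survives and no exclusion is needed. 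I expect this bookkeeping of which corner entries remain free under the orthogonal/symplectic symmetry to be the only real obstacle; the invariance computation itself is immediate once the corner entry is recognized to transform by a character.
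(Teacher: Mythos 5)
Your proof is correct and follows essentially the same route as the paper: the paper also tracks the bottom-left corner entry of each block, notes that it transforms as $x_{pq} \mapsto a_p x_{pq} a_q^{-1}$ with $a_p$ the lower-right diagonal entry of $A_p$ and $a_q$ the upper-left diagonal entry of $A_q$, and takes the same cross-ratio $x_{ij} x_{kj}^{-1} x_{kl} x_{il}^{-1}$ as the invariant. Your added explanation of the $SO_n$ exclusion (the corner entry of a block on the secondary diagonal is killed by antisymmetry) matches the remark the paper makes right after the lemma.
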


\begin{note} The matrices from $\mathfrak{so}_n$ have zeroes on the secondary diagonal. That is the reason why we have  additional restriction on the positions of blocks for $SO_n$.
\end{note}

\begin{proof} Suppose $a_i$, $a_k$ are right lower entries of matrices $A_i$, $A_k$, $a_j$, $a_l$ are left upper entries of matrices  $A_j$, $A_l$, and $x_{ij}$, $x_{il}$, $x_{kj}$, $x_{kl}$ are left lower entries of matrices $X_{ij}$, $X_{il}$, $X_{kj}$, $X_{kl}$. Then $x_{pq} \to a_p x_{pq} a_q^{-1} $, $p=i,k$, $q=j,l$. It is easy to see that $x_{ij} x_{kj}^{-1} x_{kl} x_{il}^{-1}$ is an invariant.
\end{proof}

\begin{lemma}
  Suppose there are $3$ nonzero matrices $X_{pq}$ standing in a special way at vertices of a rectangular triangle, i.e., they have indices  $ij$, $ik$, $jk$. We require that these matrices do not stand on the secondary diagonal for $SO_n$.  Then there is a rational invariant for the action of $B \cap L \cap M$. We call this invariant the invariant of type ``triangle''.
\label{triangle}
\end{lemma}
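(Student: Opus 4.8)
The plan is to follow the proof of the preceding ``square'' lemma, but to replace the cyclic product of four corner entries by a genuine matrix \emph{composition} of two of the three blocks. Write $i<j<k$ for the three indices, and recall that $B\cap L\cap M$ consists of block-diagonal matrices $\mathrm{diag}(A_1,\dots,A_r)$ with upper-triangular blocks $A_m$, acting on the blocks by $X_{pq}\mapsto A_pX_{pq}A_q^{-1}$.

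First I would observe that the product $X_{ij}X_{jk}$, a $k_i\times k_k$ matrix, transforms in exactly the same way as $X_{ik}$: since
\[
X_{ij}X_{jk}\ \longmapsto\ (A_iX_{ij}A_j^{-1})(A_jX_{jk}A_k^{-1})=A_i\,(X_{ij}X_{jk})\,A_k^{-1},
\]
both $X_{ij}X_{jk}$ and $X_{ik}$ are $k_i\times k_k$ matrices subject to the single transformation law $Y\mapsto A_iYA_k^{-1}$. As in the square lemma, let $a_i$ denote the lower-right entry of $A_i$ and $a_k$ the upper-left entry of $A_k$. Because $A_i$ is upper triangular, its bottom row is $a_i$ times the last coordinate covector, and because $A_k^{-1}$ is upper triangular, its first column is $a_k^{-1}$ times the first coordinate vector; hence for any $Y$ with $Y\mapsto A_iYA_k^{-1}$ the left lower entry transforms by the scalar $a_ia_k^{-1}$.

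Applying this to $Y=X_{ij}X_{jk}$ and to $Y=X_{ik}$, let $y$ be the left lower entry of $X_{ij}X_{jk}$ and $x_{ik}$ the left lower entry of $X_{ik}$; both pick up the same factor $a_ia_k^{-1}$, so the ratio $y/x_{ik}$ is a rational $B\cap L\cap M$-invariant, the invariant of type ``triangle''. Note that $y=\sum_m (X_{ij})_{k_i,m}(X_{jk})_{m,1}$ is bilinear in the entries of $X_{ij}$ and $X_{jk}$, whereas $x_{ik}$ is a single coordinate of $X_{ik}$, so $y/x_{ik}$ genuinely depends on all three blocks and is non-constant.

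The one point requiring care, and the step I expect to be the main obstacle, is to confirm that neither $y$ nor $x_{ik}$ vanishes identically, so that $y/x_{ik}$ is a well-defined non-constant rational function rather than $0/0$. This is precisely where the hypothesis that the three blocks do not lie on the secondary diagonal (in the $SO_n$ case) enters: for $SO_n$ the matrices of $\mathfrak{so}_n$ are antisymmetric with respect to the secondary diagonal and so have forced zeros along it, and a block sitting on the secondary diagonal would have its left lower entry (and the entries feeding into $y$) identically zero. Away from the secondary diagonal these are free coordinates, so both $y$ and $x_{ik}$ are generically nonzero, completing the argument.
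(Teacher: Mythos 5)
Your proof is correct and is essentially the paper's own argument: your quantity $y$, the lower-left entry of $X_{ij}X_{jk}$, is exactly the paper's dot product $\bar{x}_{ij}\cdot\bar{x}_{jk}$ of the lowest row of $X_{ij}$ with the left column of $X_{jk}$, so the invariant $y/x_{ik}$ coincides with the paper's $\frac{\bar{x}_{ij}\cdot\bar{x}_{jk}}{x_{ik}}$. You merely make explicit the verification the paper leaves as ``easy to check,'' via the (correct) observation that $X_{ij}X_{jk}$ transforms as $Y\mapsto A_iYA_k^{-1}$, together with the role of the secondary-diagonal restriction for $SO_n$.
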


\begin{proof}  Suppose $\bar{x}_{ij}$ is the lowest row of the matrix $X_{ij}$, $x_{ik}$ is the left lowest entry of  $X_{ik}$, and $\bar{x}_{jk}$ is the left column of $X_{jk}$. It is easy to check that
$\frac { \bar{x}_{ij} \cdot \bar{x}_{jk} } {x_{ik} }$ is an invariant.
\end{proof}

\smallskip

\begin{note} The invariants of type ``square'' and ``triangle'' do not change for groups $SO_n$ and $Sp_n$ if we consider other blocks obtained by  transposition with respect to the secondary diagonal.
\end{note}

\begin{lemma}
 Suppose there are $3$ nonzero matrices  $X_{ij}$ in one row such that their height is at least $2$.  We require that these matrices do not stand on the secondary diagonal for group $SO_n$. Then the complexity is at least $1$. If there are $4$ such matrices, then the complexity is at least $2$.
\label{in_one_row_3_4}
\end{lemma}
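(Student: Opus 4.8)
The plan is to compute the complexity directly as the transcendence degree of the field of $L\cap M\cap B$-invariant rational functions on $\mathfrak{p}_u\cap\mathfrak{q}_u$ (the quantity produced by Theorem \ref{theorm_panushev} together with the Rosenlicht remark), by writing down explicitly the required number of algebraically independent invariants. Recall that $L\cap M\cap B$ acts by $X_{ij}\mapsto A_iX_{ij}A_j^{-1}$ with all $A_i$ upper triangular. By Lemma \ref{subblocks} I may discard every block except the $m$ blocks $X_{ij_1},\dots,X_{ij_m}$ lying in the fixed row $i$ (here $m=3$ or $m=4$); this can only lower the complexity, so a lower bound for the restricted action is a lower bound for the original one. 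The hypothesis that the common height $k_i$ is at least $2$ is exactly what makes the construction possible: for height $1$ the quantities below degenerate to scalars and no invariant survives.

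The construction uses only the bottom two rows of $A_i$, that is, its lower right $2\times2$ corner $\left(\begin{smallmatrix} b & c\\ 0 & a\end{smallmatrix}\right)$, together with the leftmost column of each block. First I would record the key equivariance: since $A_{j_s}^{-1}$ is upper triangular, the leftmost column of $X_{ij_s}A_{j_s}^{-1}$ equals $p_s^{-1}$ times the leftmost column of $X_{ij_s}$, where $p_s$ is the upper left entry of $A_{j_s}$; multiplying on the left by $A_i$ then shows that the pair $v_s=(u_s,w_s)$ formed by the two lowest entries of the leftmost column of $X_{ij_s}$ transforms by $v_s\mapsto p_s^{-1}\left(\begin{smallmatrix} b & c\\ 0 & a\end{smallmatrix}\right)v_s$. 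Consequently the bottom left entries satisfy $w_s\mapsto (a/p_s)\,w_s$, and the $2\times2$ determinants $d_{1s}=u_1w_s-u_sw_1$ satisfy $d_{1s}\mapsto (ab/(p_1p_s))\,d_{1s}$. Crucially these rules hold for the whole group, unipotent part included, not merely for the torus. Hence $d_{1s}/(w_1w_s)\mapsto (b/a)\,d_{1s}/(w_1w_s)$ with the \emph{same} factor for every $s$, so each ratio
\[ \frac{d_{1s}\,w_t}{d_{1t}\,w_s}\qquad(2\le s,t\le m) \]
is an $L\cap M\cap B$-invariant rational function.

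Finally I would count. The $m-2$ functions obtained by fixing $t=2$ and letting $s=3,\dots,m$ are algebraically independent, since the $s$-th of them is the only one involving the entry $w_s$ and the minor $d_{1s}$, which are free coordinates of the (fully free) block $X_{ij_s}$ and are generically nonzero. This gives complexity at least $m-2$, i.e.\ at least $1$ for three blocks and at least $2$ for four. The hard part is not the algebra but the verification that every entry used is genuinely free and not forced to vanish by the block pattern; this is precisely where the restriction ``not on the secondary diagonal for $SO_n$'' is needed, for the same reason as in Lemmas \ref{square} and \ref{triangle} (matrices of $\mathfrak{so}_n$ vanish on the secondary diagonal), while for $SL_n$ and $Sp_n$ no such restriction is required. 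A secondary point to confirm is that discarding the other blocks via Lemma \ref{subblocks} does not alter the displayed transformation laws, which is immediate since they depend only on $A_i$ and on the entries $p_s$ of the $A_{j_s}$.
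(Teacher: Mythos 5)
Your proof is correct, but it takes a genuinely different route from the paper's. The paper argues by normalization: it first reduces to $SL_n$ (for $SO_n$ and $Sp_n$ the acting group only gets smaller while the relevant blocks are the same, so orbit codimension can only grow), then uses the group action to bring the two lowest entries of the first column of the first block to $(0,1)$ and of the second block to $(1,1)$, determines the stabilizer of this configuration (lower-right $2\times 2$ corner of $A_i$ scalar, matching corner entries of the $A_{j_s}$), and observes that the same two entries of the third block can then only be rescaled by one common factor, so their ratio $u_3/w_3$ is a residual continuous parameter; the four-block case is settled ``similarly''. You instead exhibit explicit $B\cap L\cap M$-invariant rational functions and count algebraically independent ones via Rosenlicht --- precisely the technique the paper itself uses in Lemmas \ref{square} and \ref{triangle}, though not for this lemma. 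The two arguments are dual: your invariant $\frac{d_{13}w_2}{d_{12}w_3}$ evaluated at the paper's normal form is exactly its leftover parameter $u_3/w_3$. Your route buys a uniform treatment of all three groups (your transformation laws use only upper-triangularity of the $A$'s, so the extra constraints on them in the orthogonal and symplectic cases are harmless and no reduction to $SL_n$ is needed), an explicit rather than ``similar'' four-block case, and a transparent explanation of the secondary-diagonal restriction for $SO_n$; the paper's route avoids any discussion of algebraic independence. That independence claim is the one point you should tighten, since ``the $s$-th function is the only one involving $w_s$ and $d_{1s}$'' is not by itself a proof: restrict instead to the locus $(u_1,w_1)=(0,1)$, $(u_2,w_2)=(1,1)$, where $d_{12}=-1$ and your invariants become $u_s/w_s$, $s=3,\dots,m$; any polynomial relation among your invariants would restrict to a relation among these non-constant functions of pairwise disjoint variables, and there is none.
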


 \begin{proof}
 We prove the first statement for $SL_n$ (since $SO_n$ and $Sp_n$ are subgroups of $SL_n$, we obtain this statement  for other groups as a consequence). By the group action we can make left lower entry and the entry above of the first general matrix equal to $1$ and $0$ respectively. We do not want to change these entries further. Thus we can act on the left only by multiplication by a matrix such that its lower right $2 \times 2$ submatrix is diagonal. Then we can make the same entries of the second general matrix equal to $1$. In order not to change these $4$ entries we must act on the left only by matrices having $\lambda E$ as the lower right $2 \times 2$ submatrix. Now consider the same two entries of the third matrix (they are nonzero for a general matrix): they are multiplied by one and the same number. We can make one of them equal to $1$ and after that we cannot change  another one without changing other $5$ considered entries. Thus general orbits depend at least on one continuous parameter, i.e., $c(X) \geqslant 1$. The proof for $4$ matrices is similar.
\end{proof}

Now we discuss a method for computing complexity of the action of $L \cap M$ on $\mathfrak{p}_u \cap \mathfrak{q}_u$. The Lie algebras of $SO_n$ and $Sp_n$ have symmetry in their block structure. So it is sufficient to consider the blocks on and below the secondary  diagonal.
By the action of $B \cap L \cap M$ we can put our blocks, one by one, in some canonical form and consider the action of the stabilizer of this canonical form on the remaining blocks. The number of the parameters left is the complexity. The same method was used in the proof of Lemma \ref{in_one_row_3_4}.

For $SO_n$ with even $n$ there are special parabolic subgroups, which we mark with strokes. We may assume that only one of the parabolic subgroups is special and the second one does not have a middle block (in the converse case we apply the automorphism of $SO_n$ that transposes two middle basis vectors).
We can estimate the complexity from below by the complexity of another action such that  both parabolic subgroups are not special. For this, let us conjugate the special subgroup with transposition of two middle basis vectors and replace two middle blocks by one (here we may assume that the size of two middle blocks is not greater than the respective size for the second group). We enlarge the parabolic subgroup, so the complexity can only become smaller.

 Now consider particular cases. The pictures show the location of nonzero blocks $X_{ij}$ and matrices $A_i$; the blocks $X_{ij}$ are grey and the matrices $A_i$ are black. We denote the complexity  by $c$. We enumerate the possible locations of the blocks $X_{ij}$. If Lemmas \ref{subblocks}, \ref{square}, \ref{triangle}, \ref{in_one_row_3_4} give an estimate $c \geqslant 2$ for a given case, we shall not consider this case. We shall not consider cases, that are symmetrical to the cases already considered. The results of our considerations are presented below. We indicate only the cases in which the complexity is not greater than~$1$.

\subsection{Group $\mathbf{SL_n}$}

\noindent \raisebox{1pt}{1.}
\begin{picture}(25,25)(0,10)
\gs{10}{10}{1}
\bs{0}{10}{2}
\put(10,10){\line(1,0){10}} \put(10,20){\line(1,0){10}}
\put(10,10){\line(0,1){10}} \put(20,10){\line(0,1){10}}
\end{picture}
\quad
\begin{picture}(35,35)(0,20)
\gs{20}{20}{1}
\bs{0}{20}{3}
\put(30,30){\line(-1,0){10}} \put(30,20){\line(-1,0){10}}
\put(30,30){\line(0,-1){10}} \put(20,30){\line(0,-1){10}}
\end{picture}
\quad \raisebox{0pt}{
\parbox[t]{5cm}{

$k_1, k_r$ arbitrary \quad  $c=0$ }}

\bigskip

%%------------2----------------------------------------------------------

\noindent \raisebox{21pt}{2.}
\begin{picture}(35,35)
\gs{20}{20}{2}
\bs{0}{20}{3}
\put(30,30){\line(-1,0){20}} \put(30,20){\line(-1,0){20}}
\put(30,30){\line(0,-1){10}} \put(20,30){\line(0,-1){10}}
\put(10,30){\line(0,-1){10}}
\end{picture}
\quad
\begin{picture}(45,45)(0,10)
\gs{30}{30}{2}
\bs{0}{30}{4}
\put(40,40){\line(-1,0){20}} \put(40,30){\line(-1,0){20}}
\put(40,40){\line(0,-1){10}} \put(30,40){\line(0,-1){10}}
\put(20,40){\line(0,-1){10}}
\end{picture}
\quad \raisebox{30pt}{
\parbox[t]{5cm}{

\begin{tabbing}
$k_1 \leqslant 2$ \qquad \qquad \quad \= $k_{r-1}, k_r$ arbitrary \qquad \=  $c=0$ \\
 $k_1$ arbitrary \> $k_{r-1} = 1$ or $k_r = 1$   \> $c=0$  \\
 $k_1 \geqslant 3$ \>  $k_{r-1}=k_r=2$              \>   $c=1$   \\
 $k_1 = 3$ \>  $k_{r-1}, k_r \geqslant 2$           \>  $c=1$   \\
\end{tabbing}

}}

 As an example, we consider this case in details. Without loss of generality we may assume that $k_{r-1} \geqslant k_r$. Acting from the right, we put two general blocks in the following form: the entries on the secondary diagonal coming from the lower left corner are equal to $1$ and the entries to the right of this diagonal are equal to $0$. We can make entries of the first block above the secondary diagonal coming from the left lower corner equal to $0$. Let us find the stabilizer of this form. $A_1$ has zeroes in all entries except the diagonal and the left upper $\max(k_1 - k_{r-1}, 0) \times \max(k_1 - k_{r-1}, 0)$ submatrix. $A_{r-1}$ and $A_r$ have similar form, but the submatrix is lower right and of the size $\max(k_i - k_1, 0) \times  \max(k_i - k_1, 0)$, where $i = r-1, r$, respectively; the diagonal entries are equal to the diagonal entries of $A_1$ (in order to preserve $1$'s in blocks). Now we can make entries in the first column and rows $2, \dots, \min(k_1, k_{r-1})$ from the bottom of the second block equal to $1$. Consider the stabilizer of this form. All diagonal entries of $A_1$, $A_{r-1}$ and $A_r$ except entries in the considered submatrices are equal to one and the same number $\lambda$.

Suppose $k_1 \leqslant 2$ or $k_r = 1$. Then we can make entries in the  first column of the second block equal to $1$. Thus there are no free parameters, i.e., a general point lies in the orbit of the point of the described form, therefore $c = 0$.

Suppose $k_1 = 3$, $k_{r-1} \geqslant k_r \geqslant 2$. Then we can make the entry in the first column and in the third row from the bottom of the second block equal to $1$ (if it is not already $1$). Then $A_1 = \lambda E$ and the two upper diagonal entries of $A_r$ equal $\lambda$. Thus we cannot change the entry in the second column and in the third row from the bottom. Thus a general orbit depends on one continuous parameter, i.e., $c = 1$.

Now suppose $k_{r-1} = k_r = 2$, $k_1 \geqslant 4$. Consider the submatrix of the second block above the second row from the bottom. We can multiply it on the left by any upper triangular matrix and the action on the right reduces to multiplication of all entries by one and the same number. We can make the lower $2 \times 2$ submatrix of this submatrix equal to
$ \bigl(\begin{smallmatrix} 0 & 1 \\ 1 & \ast \\ \end{smallmatrix}\bigr)$ and all entries above equal to zero. We cannot change the entry $\ast$, hence $c = 1$.

It remains to consider the case $k_1 \geqslant 4$, $k_{r-1} \geqslant 3$, $k_r \geqslant 2$. We can make the entry in the first column and in the fourth row from the bottom of the second block equal to $1$ (if it is not already $1$). Then the lower $4 \times 4$ submatrix of $A_1$ equals $\lambda E$. Consider the entries in the second column and rows $3$ and $4$ from the bottom of the second block: we cannot change them.  Thus $c \geqslant 2$.

%---------3------------------------------------------------------------

\noindent \raisebox{31pt}{3.}
\begin{picture}(45,45)
\gs{30}{30}{3}
\bs{0}{30}{4}
\put(40,40){\line(-1,0){30}} \put(40,30){\line(-1,0){30}}
\put(40,40){\line(0,-1){10}} \put(30,40){\line(0,-1){10}}
\put(20,40){\line(0,-1){10}} \put(10,40){\line(0,-1){10}}
\end{picture}
\quad
\begin{picture}(55,55)(0,10)
\gs{40}{40}{3}
\bs{0}{40}{5}

\put(50,50){\line(-1,0){30}} \put(50,40){\line(-1,0){30}}
\put(50,50){\line(0,-1){10}} \put(40,50){\line(0,-1){10}}
\put(30,50){\line(0,-1){10}} \put(20,50){\line(0,-1){10}}
\end{picture}
\quad \raisebox{40pt}{
\parbox[t]{5cm}{

\begin{tabbing}
$k_1=1$ \quad \= $k_{r-2}, k_{r-1}, k_r$ arbitrary \qquad \qquad  \= $c=0$ \\
$k_1=2$       \>  $k_{r-2}, k_{r-1}, k_r$ arbitrary \> $c=1$ \\
 $k_1 \geqslant 3$ \>  $k_{r-2}=k_{r-1}=k_r=1$         \>  $c=1$   \\

\end{tabbing}

}}

\bigskip

%-----------------------------------------------------------------
\noindent If we add blocks to the first row, then their height cannot be greater than $1$ by Lemma~\ref{in_one_row_3_4}.

%---------4-------------------------------------------------

\noindent \raisebox{46pt}{4.}
\begin{picture}(60,60)
\gs{20}{45}{2}
\gs{45}{45}{1}
\bs{0}{45}{2}
\bs{45}{0}{1}
\put(55,55) {\line(-1,0){10}} \put(30,55) {\line(-1,0){20}}
\put(55,45) {\line(-1,0){10}} \put(30,45) {\line(-1,0){20}}
\put(55,55) {\line(0,-1){10}} \put(45,55) {\line(0,-1){10}}
\put(30,55) {\line(0,-1){10}} \put(20,55) {\line(0,-1){10}}
\put(10,55) {\line(0,-1){10}}

\multiput(22,33)(5,-5){5}{\circle*{1}}
\multiput(32,47)(5,0){3}{\circle*{1}}
\end{picture}
\quad
\begin{picture}(70,70)(0,10)
\gs{30}{55}{2}
\gs{55}{55}{1}
\bs{0}{55}{2}
\bs{55}{0}{1}
\put(65,65) {\line(-1,0){10}} \put(40,65) {\line(-1,0){20}}
\put(65,55) {\line(-1,0){10}} \put(40,55) {\line(-1,0){20}}
\put(65,65) {\line(0,-1){10}} \put(55,65) {\line(0,-1){10}}
\put(40,65) {\line(0,-1){10}} \put(30,65) {\line(0,-1){10}}
\put(20,65) {\line(0,-1){10}}
\multiput(22,43)(5,-5){7}{\circle*{1}}
\multiput(42,57)(5,0){3}{\circle*{1}}
\end{picture}
\quad \raisebox{45pt}{
\parbox[t]{5cm}{
$k_1 = 1$ \quad $c=0$ }}

\medskip
%------------5---------------------------------------------

\noindent \raisebox{21pt}{5a.}
\begin{picture}(35,35)
\gs{20}{20}{2}
\gs{20}{10}{1}
\bs{0}{20}{3}
\put(30,30){\line(-1,0){20}} \put(30,20){\line(-1,0){20}}
\put(30,10){\line(-1,0){10}} \put(30,30){\line(0,-1){20}}
\put(20,30){\line(0,-1){20}} \put(10,30){\line(0,-1){10}}
\end{picture}
\quad \raisebox{30pt}{
\parbox[t]{5cm}{
\begin{tabbing}

at least two of $k_1, k_2, k_3$ equal $1$ \quad \= $c = 1$ \\

\end{tabbing}

}}

\noindent \raisebox{31pt}{5b.}
\begin{picture}(45,45)
\gs{30}{30}{2}
\gs{30}{20}{1}
\bs{0}{30}{4}
\put(40,40){\line(-1,0){20}} \put(40,30){\line(-1,0){20}}
\put(40,20){\line(-1,0){10}} \put(40,40){\line(0,-1){20}}
\put(30,40){\line(0,-1){20}} \put(20,40){\line(0,-1){10}}
\end{picture}
\quad \raisebox{40pt}{
\parbox[t]{5cm}{
\begin{tabbing}

 $k_1 = 1$ or $k_4 = 1$ \quad   \= $k_2, k_3$ arbitrary \qquad \quad   \= $c = 0$  \\
 $k_1 = k_4 = 2$   \> $k_2, k_3$ arbitrary \> $c = 1$ \\
 $k_1 = 2$, $k_4 \geqslant 3$   \> $k_2 = 1$, $k_3$ arbitrary  \> $c = 1$ \\
 $k_1 \geqslant 3$, $k_4 = 2$   \> $k_2$ arbitrary, $k_3 = 1$ \> $c = 1$ \\
\end{tabbing}
}}

%--------------6-----------------------------------------------------

\noindent \raisebox{31pt}{6.}
\begin{picture}(45,45)
\gs{30}{30}{3}
\gs{30}{20}{1}
\bs{0}{30}{4}
\put(40,40){\line(-1,0){30}} \put(40,30){\line(-1,0){30}}
\put(40,20){\line(-1,0){10}} \put(40,40){\line(0,-1){20}}
\put(30,40){\line(0,-1){20}} \put(20,40){\line(0,-1){10}}
\put(10,40){\line(0,-1){10}}
\end{picture}
\quad \raisebox{40pt}{
\parbox[t]{5cm}{
\begin{tabbing}

 $k_1 = 1$ \qquad \= $k_2=1$ or $k_4=1$ \quad \= $c=1$ \\

\end{tabbing}
}}

\smallskip

%--------------7-------------------------------------------

\noindent \raisebox{11pt}{7.}
\begin{picture}(50,25)
\gs{0}{10}{1}
\gs{35}{10}{2}
\gs{35}{0}{1}
\put(45,20){\line(0,-1){20}} \put(35,20){\line(0,-1){20}}
\put(25,20){\line(0,-1){10}} \put(10,20){\line(0,-1){10}}
\put(0,20){\line(0,-1){10}} \put(45,20){\line(-1,0){20}}
\put(45,10){\line(-1,0){20}} \put(45,0){\line(-1,0){10}}
\put(10,20){\line(-1,0){10}} \put(10,10){\line(-1,0){10}}
\multiput(12,12)(5,0){3}{\circle*{1}}
\end{picture}
%\raisebox{10}{
%\parbox[t]{10cm}{(изображены только блоки $X_{ij}$) }}

\noindent
This case appears only when the number of blocks in the first row is $s \leqslant 3$. This is Case $5$ or $6$.

%----------8---------------------------------------------------

\noindent \raisebox{31pt}{8.}
\begin{picture}(45,45)
\gs{30}{30}{2}
\gs{30}{20}{2}
\bs{0}{30}{4}
\put(40,40){\line(-1,0){20}} \put(40,30){\line(-1,0){20}}
\put(40,20){\line(-1,0){20}} \put(40,40){\line(0,-1){20}}
\put(30,40){\line(0,-1){20}} \put(20,40){\line(0,-1){20}}
\end{picture}
\quad
\begin{picture}(55,55)(0,10)
\gs{40}{40}{2}
\gs{40}{30}{2}
\bs{0}{40}{5}
\put(50,50){\line(-1,0){20}} \put(50,40){\line(-1,0){20}}
\put(50,30){\line(-1,0){20}} \put(50,50){\line(0,-1){20}}
\put(40,50){\line(0,-1){20}} \put(30,50){\line(0,-1){20}}
\end{picture}
\quad \raisebox{40pt}{
\parbox[t]{5cm}{
\begin{tabbing}

 $k_1 = k_2 = 1$ or $k_{r-1} = k_r = 1$  \quad  \= $c=1$ \\

\end{tabbing}
}}

\bigskip
%----------9--------------------------------------------

\noindent \raisebox{31pt}{9a.}
\begin{picture}(45,45)
\gs{30}{30}{3}
\gs{30}{20}{2}
\bs{0}{30}{4}
\put(40,40){\line(-1,0){30}} \put(40,30){\line(-1,0){30}}
\put(40,20){\line(-1,0){20}} \put(40,40){\line(0,-1){20}}
\put(30,40){\line(0,-1){20}} \put(20,40){\line(0,-1){20}}
\put(10,40){\line(0,-1){10}}
\end{picture}
\quad \raisebox{30pt}{
\parbox[t]{10cm}{$c \geqslant 2$, because there are independent invariants of types ``square'' and ``triangle''}}

\noindent \raisebox{41pt}{9b.}
\begin{picture}(55,55)
\gs{40}{40}{3}
\gs{40}{30}{2}
\bs{0}{40}{5}
\put(50,50){\line(-1,0){30}} \put(50,40){\line(-1,0){30}}
\put(50,30){\line(-1,0){20}} \put(50,50){\line(0,-1){20}}
\put(40,50){\line(0,-1){20}} \put(30,50){\line(0,-1){20}}
\put(20,50){\line(0,-1){10}}
\end{picture}
\raisebox{50pt}{
\parbox[t]{5cm}{
\begin{tabbing}

 $k_1 = 1$ \quad \= $k_2=1$ \qquad \qquad \quad \= $k_3$ arbitrary \quad \= $k_4, k_5$ arbitrary \quad \=  $c=1$ \\
 $k_1 = 1$ \> $k_2$ arbitrary \> $k_3$ arbitrary \> $k_4=k_5=1$  \> $c=1$ \\

\end{tabbing}
}}

%-------------10-------------------------------------------------------------

\noindent \raisebox{11pt}{10.}
\begin{picture}(60,25)
\gs{0}{10}{1}
\gs{45}{10}{3}
\gs{45}{0}{2}
\put(55,20){\line(0,-1){20}} \put(45,20){\line(0,-1){20}}
\put(35,20){\line(0,-1){20}} \put(25,20){\line(0,-1){10}}
\put(10,20){\line(0,-1){10}} \put(0,20){\line(0,-1){10}}
\put(55,20){\line(-1,0){30}} \put(55,10){\line(-1,0){30}}
\put(55,0){\line(-1,0){20}} \put(10,20){\line(-1,0){10}}
\put(10,10){\line(-1,0){10}}
\multiput(12,12)(5,0){3}{\circle*{1}}
\end{picture}
%\raisebox{10}{
%\parbox[t]{10cm}{(изображены только блоки $X_{ij}$) }}

\noindent
This case appears only when the number of blocks in the first row is $s \leqslant 4$.

\noindent
  $s=3$: this is Case $9$.

\noindent
  $s=4$: $c \geqslant 2$, because there are independent invariants of types ``square'' and ``triangle''.
\smallskip

%---------------------------------------------------------------
If there are at least $3$ blocks in the second row, then there are two independent invariants of type ``square'', i.e., $c \geqslant 2$.

%----------------11---------------------------------------------

\noindent \raisebox{31pt}{11a.}
\begin{picture}(45,45)
\gs{30}{30}{3}
\gs{30}{20}{1}
\gs{30}{10}{1}
\bs{0}{30}{4}
\put(40,40){\line(-1,0){30}} \put(40,30){\line(-1,0){30}}
\put(40,20){\line(-1,0){10}} \put(40,10){\line(-1,0){10}}
\put(40,40){\line(0,-1){30}} \put(30,40){\line(0,-1){30}}
\put(20,40){\line(0,-1){10}} \put(10,40){\line(0,-1){10}}
\end{picture}
\quad \raisebox{30pt}{
\parbox[t]{10cm}{$c \geqslant 2$, because there are two invariants of type ``triangle''}}

\noindent \raisebox{41pt}{11b.}
\begin{picture}(55,55)
\gs{40}{40}{3}
\gs{40}{30}{1}
\gs{40}{20}{1}
\bs{0}{40}{5}
\put(50,50){\line(-1,0){30}} \put(50,40){\line(-1,0){30}}
\put(50,30){\line(-1,0){10}} \put(50,20){\line(-1,0){10}}
\put(50,50){\line(0,-1){30}} \put(40,50){\line(0,-1){30}}
\put(30,50){\line(0,-1){10}} \put(20,50){\line(0,-1){10}}
\end{picture}
\quad \raisebox{50pt}{
\parbox[t]{5cm}{
\begin{tabbing}

 $k_1 = k_5 = 1$ \quad \= $c=1$ \\

\end{tabbing}
}}

%-----------12--------------------------------------------------------

\noindent \raisebox{21pt}{12.}
\begin{picture}(50,35)
\gs{0}{20}{1}
\gs{35}{20}{2}
\gs{35}{10}{1}
\gs{35}{0}{1}
\put(45,30){\line(0,-1){30}} \put(35,30){\line(0,-1){30}}
\put(25,30){\line(0,-1){10}} \put(10,30){\line(0,-1){10}}
\put(0,30){\line(0,-1){10}} \put(45,30){\line(-1,0){20}}
\put(45,20){\line(-1,0){20}} \put(45,10){\line(-1,0){10}}
\put(10,30){\line(-1,0){10}} \put(10,20){\line(-1,0){10}}
\put(45,0){\line(-1,0){10}}

\multiput(12,22)(5,0){3}{\circle*{1}}
\end{picture}

\noindent
This case appears only when the number of blocks in the first row is $s \leqslant 4$.

\noindent
 $s=3$: this is Case $11$.

\noindent
 $s=4$: $c \geqslant 2$, because there are two invariants of type ``triangle''.

%-----------------13---------------------------------------------

\noindent \raisebox{41pt}{13.}
\begin{picture}(45,45)(0,-10)
\gs{30}{30}{3}
\gs{30}{20}{2}
\gs{30}{10}{1}
\bs{0}{30}{4}
\put(40,40){\line(-1,0){30}} \put(40,30){\line(-1,0){30}}
\put(40,20){\line(-1,0){20}} \put(40,10){\line(-1,0){10}}
\put(40,40){\line(0,-1){30}} \put(30,40){\line(0,-1){30}}
\put(20,40){\line(0,-1){20}} \put(10,40){\line(0,-1){10}}
\end{picture}
\quad
\begin{picture}(55,55)
\gs{40}{40}{3}
\gs{40}{30}{2}
\gs{40}{20}{1}
\bs{0}{40}{5}
\put(50,50){\line(-1,0){30}} \put(50,40){\line(-1,0){30}}
\put(50,30){\line(-1,0){20}} \put(50,20){\line(-1,0){10}}
\put(50,50){\line(0,-1){30}} \put(40,50){\line(0,-1){30}}
\put(30,50){\line(0,-1){20}} \put(20,50){\line(0,-1){10}}
\end{picture}
\raisebox{30pt}{
\parbox[t]{8cm}{$c \geqslant 2$ as there are invariants of types ``square'' and ``triangle''}}

%-------------14----------------------------------------------------

\noindent \raisebox{21pt}{14.}
\begin{picture}(60,35)
\gs{0}{20}{1}
\gs{45}{20}{3}
\gs{45}{10}{2}
\gs{45}{0}{1}
\put(55,30){\line(0,-1){30}} \put(45,30){\line(0,-1){30}}
\put(35,30){\line(0,-1){20}} \put(25,30){\line(0,-1){10}}
\put(10,30){\line(0,-1){10}} \put(0,30){\line(0,-1){10}}
\put(55,30){\line(-1,0){30}} \put(55,20){\line(-1,0){30}}
\put(55,10){\line(-1,0){20}} \put(10,30){\line(-1,0){10}}
\put(10,20){\line(-1,0){10}} \put(55,0){\line(-1,0){10}}
\multiput(12,22)(5,0){3}{\circle*{1}}
\end{picture}

\noindent
This case appears only when the number of blocks in the first row is $s \leqslant 4$.

\noindent
 $s=3$: this is Case $13$.

\noindent
 $s=4$: $c \geqslant 2$, since this case can be reduced to Case $13$ by Lemma~\ref{subblocks}

%-----------------15,16----------------------------------------

\noindent \raisebox{26pt}{15.}
\begin{picture}(40,40)(0,0)
\gs{0}{25}{1}
\gs{25}{25}{1}
\gs{25}{0}{1}
\put(35,35){\line(0,-1){10}} \put(25,35){\line(0,-1){10}}
\put(10,35){\line(0,-1){10}} \put(0,35){\line(0,-1){10}}
\put(25,10){\line(0,-1){10}} \put(35,10){\line(0,-1){10}}
\put(35,35){\line(-1,0){10}} \put(35,25){\line(-1,0){10}}
\put(10,35){\line(-1,0){10}} \put(10,25){\line(-1,0){10}}
\put(35,10){\line(-1,0){10}} \put(35,0){\line(-1,0){10}}
\multiput(12,27)(5,0){3}{\circle*{1}}
\multiput(30,12)(0,5){3}{\circle*{1}}
\end{picture}
\quad
\raisebox{26pt}{16.}
\begin{picture}(60,60)(0,20)
\gs{0}{45}{1}
\gs{45}{45}{3}
\gs{45}{35}{2}
\gs{45}{25}{1}
\gs{45}{0}{1}
\put(55,55){\line(0,-1){30}} \put(45,55){\line(0,-1){30}}
\put(35,55){\line(0,-1){20}} \put(25,55){\line(0,-1){10}}
\put(10,55){\line(0,-1){10}} \put(0,55){\line(0,-1){10}}
\put(55,10){\line(0,-1){10}} \put(45,10){\line(0,-1){10}}
\put(55,55){\line(-1,0){30}} \put(55,45){\line(-1,0){30}}
\put(55,35){\line(-1,0){20}} \put(55,25){\line(-1,0){10}}
\put(10,55){\line(-1,0){10}} \put(10,45){\line(-1,0){10}}
\put(55,10){\line(-1,0){10}} \put(55,0){\line(-1,0){10}}
\multiput(12,47)(5,0){3}{\circle*{1}}
\multiput(50,12)(0,5){3}{\circle*{1}}
\end{picture}

\bigskip

\medskip

\noindent
These cases appear only when $|s-m| \leqslant~1$ (here $s$, $m$ are the numbers of blocks in the first row and in the last column, respectively). If $s,m \geqslant 4$ (these are the remaining cases), then the complexity is at least $2$, because there are two invariants of type ``triangle'' (Case $15$) or invariants of types ``triangle'' and ``square'' (Case~$16$).

Combining all cases together we obtain the classification given in Table \ref{sl}. Recall that the classification is given up to transposition with respect to the secondary diagonal and permutation of parabolics.

\subsection{Group $\mathbf{SO_n}$}

Consider the cases, where $P$ and $Q$ are not special.

%--------------1-------------------------------------

\noindent \raisebox{11pt}{1.}
\begin{picture}(25,25)
\gs{10}{10}{1}
\bs{0}{10}{2}
\put(10,10){\line(1,0){10}} \put(10,20){\line(1,0){10}}
\put(10,10){\line(0,1){10}} \put(20,10){\line(0,1){10}}
\end{picture}
\quad \raisebox{10pt}{ $c=0$ }

%--------------2-------------------------------------

\noindent \raisebox{21pt}{2a.}
\begin{picture}(35,35)
\gs{20}{20}{2}
\gs{20}{10}{1}
\bs{0}{20}{3}
\put(30,30){\line(-1,0){20}} \put(30,20){\line(-1,0){20}}
\put(30,10){\line(-1,0){10}} \put(30,30){\line(0,-1){20}}
\put(20,30){\line(0,-1){20}} \put(10,30){\line(0,-1){10}}
\end{picture}
\quad \raisebox{30pt}{
\parbox[t]{5cm}{

\begin{tabbing}
$k_1=1$ \qquad \qquad \qquad  \= $k_2$ arbitrary \quad   \=  $c=0$ \\
$k_1$ arbitrary
     \> $k_2=1$                         \> $c=0$   \\
$k_1=2$              \> $k_2=2$                          \> $c=1$   \\

\end{tabbing}
}}

\noindent \raisebox{31pt}{2b.}
\begin{picture}(45,45)
\gs{30}{30}{2}
\gs{30}{20}{1}
\bs{0}{30}{4}
\put(40,40){\line(-1,0){20}} \put(40,30){\line(-1,0){20}}
\put(40,20){\line(-1,0){10}} \put(40,40){\line(0,-1){20}}
\put(30,40){\line(0,-1){20}} \put(20,40){\line(0,-1){10}}
\end{picture}
\quad \raisebox{40pt}{
\parbox[t]{5cm}{

\begin{tabbing}
$k_1 \leqslant 3$ \qquad \qquad \qquad   \=  $k_2$ arbitrary \quad \= $c=0$   \\
$k_1$ arbitrary            \>  $k_2=1$                 \> $c=0$   \\
$k_1=4$                      \>  $k_2=2$                 \> $c=1$   \\
\end{tabbing}

}}

%--------------3-------------------------------------

\noindent \raisebox{31pt}{3a.}
\begin{picture}(45,45)
\gs{30}{30}{3}
\gs{30}{20}{1}
\gs{30}{10}{1}
\bs{0}{30}{4}
\put(40,40){\line(-1,0){30}} \put(40,30){\line(-1,0){30}}
\put(40,20){\line(-1,0){10}} \put(40,10){\line(-1,0){10}}
\put(40,40){\line(0,-1){30}} \put(30,40){\line(0,-1){30}}
\put(20,40){\line(0,-1){10}} \put(10,40){\line(0,-1){10}}
\end{picture}
\quad \raisebox{40pt}{
\parbox[t]{5cm}{
%      2б
\begin{tabbing}
$k_1=1$ \qquad      \=   $k_2$ arbitrary \quad \=   $c=0$  \\

$k_1=2$             \> $k_2=1$            \>   $c=1$  \\

\end{tabbing}

}}

\noindent \raisebox{41pt}{3b.}
\begin{picture}(55,55)
\gs{40}{40}{3}
\gs{40}{30}{1}
\gs{40}{20}{1}
\bs{0}{40}{5}
\put(50,50){\line(-1,0){30}} \put(50,40){\line(-1,0){30}}
\put(50,30){\line(-1,0){10}} \put(50,20){\line(-1,0){10}}
\put(50,50){\line(0,-1){30}} \put(40,50){\line(0,-1){30}}
\put(30,50){\line(0,-1){10}} \put(20,50){\line(0,-1){10}}
\end{picture}
\quad \raisebox{50pt}{
\parbox[t]{5cm}{
%2a
\begin{tabbing}

$k_1=1$ \qquad   \= $k_2$ arbitrary \quad   \= $k_3$ arbitrary \quad    \=  $c=0$  \\

$k_1=2$          \> $k_2$ arbitrary         \> $k_3=1$                    \>  $c=1$  \\

\end{tabbing}

}}

%----------------------------------4------------------------------------------

\noindent \raisebox{41pt}{4a.}
\begin{picture}(55,55)

\gs{40}{40}{4}
\gsdown{40}{40}{4}
\bs{0}{40}{5}

\put(50,50){\line(-1,0){40}} \put(50,40){\line(-1,0){40}}
\put(50,30){\line(-1,0){10}} \put(50,20){\line(-1,0){10}}
\put(50,10){\line(-1,0){10}} \put(50,50){\line(0,-1){40}}
\put(40,50){\line(0,-1){40}} \put(30,50){\line(0,-1){10}}
\put(20,50){\line(0,-1){10}} \put(10,50){\line(0,-1){10}}

\end{picture}
\quad \raisebox{50pt}{
\parbox[t]{5cm}{
%3б
\begin{tabbing}
$k_1=1$ \qquad \= $k_2$ arbitrary \quad \=  $k_3$ arbitrary \quad \=   $c=1$   \\

\end{tabbing}

}}

\noindent \raisebox{51pt}{4b.}
\begin{picture}(65,65)
\gs{50}{50}{4}
\gsdown{50}{50}{4}
\bs{0}{50}{6}

\put(60,60){\line(-1,0){40}} \put(60,50){\line(-1,0){40}}
\put(60,40){\line(-1,0){10}} \put(60,30){\line(-1,0){10}}
\put(60,20){\line(-1,0){10}} \put(60,60){\line(0,-1){40}}
\put(50,60){\line(0,-1){40}} \put(40,60){\line(0,-1){10}}
\put(30,60){\line(0,-1){10}} \put(20,60){\line(0,-1){10}}

\end{picture}
\quad \raisebox{60pt}{
\parbox[t]{5cm}{
%3a
\begin{tabbing}
$k_1=1$ \qquad \= $k_2$ arbitrary \quad  \= $k_3$ arbitrary \quad  \=  $c=0$            \\

\end{tabbing}

}}

If there are at least $5$ blocks in the first row, then by Lemma~\ref{in_one_row_3_4} their height cannot be greater than one.

Suppose nonzero blocks stand only in the first row and in the last column (denote the number of blocks in the first row by $m$) and suppose that their height is $k_1 = 1$. Denote the complexity for this case by $c_{m,a}$ if $r = m + 1$ and by $c_{m,b}$ if $r = m + 2$ (here $r$ is the number of diagonal blocks). Then we have the following lemma.

\begin{lemma}
Suppose $m \geqslant 4$. We have $c_{m,a}=c_{m-1,b}+1$, $c_{m,b}=c_{m-1,a}$.
\label{so_k1=1}
\end{lemma}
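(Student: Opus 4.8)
The plan is to use the normalization method recalled just before the lemma: the complexity equals the number of parameters that survive after the blocks of $\mathfrak{p}_u\cap\mathfrak{q}_u$ are put into canonical form by $B\cap L\cap M$, processed one at a time. Because of the symmetry $A_{r+1-i}=(A_i^S)^{-1}$ of $SO_n$, the last-column blocks are the anti-transposes of the first-row blocks, so it suffices to normalize the first-row blocks $X_{1,j}$ (each a $1\times k_j$ row vector, since $k_1=1$), keeping in mind that acting on $X_{1,j}$ by $A_j$ simultaneously acts on its partner $X_{r+1-j,\,r}$ by $A_{r+1-j}=(A_j^S)^{-1}$. In case $a$ the nonzero first-row blocks occupy columns $2,\dots,r-1$ (the corner $X_{1,r}$ vanishes, lying on the secondary diagonal where $\mathfrak{so}_n$ is zero), and in case $b$ the columns $3,\dots,r-1$; in particular $c_{m,a}$ and $c_{m-1,b}$ have the \emph{same} block sizes $k_1,\dots,k_{m+1}$ and differ only by the presence of $X_{1,2}$.

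I would first prove $c_{m,b}=c_{m-1,a}$. Here $r=m+2$; consider the near-corner block $X_{1,m+1}$. The symmetry couples $A_2$ and $A_{m+1}$ into a single $GL_{k_{m+1}}$-worth of freedom, and its only action on surviving blocks is on $X_{1,m+1}$: indeed $X_{1,2}$ is absent in case $b$, while $X_{2,r}=X_{r-1,r}$ is merely the dependent anti-transpose of $X_{1,m+1}$. A generic row vector is carried to $(1,0,\dots,0)$ by the Borel of $GL_{k_{m+1}}$ alone, \emph{without} spending the scalar $A_1$. After this, $A_1$ and all remaining blocks are untouched, and dropping the now-rigid $A_2,A_{m+1}$ and relabelling $A_1,A_3,\dots,A_m,A_{m+2}$ turns the residual action into exactly the configuration $(m-1,a)$ (the symmetry ties $A_3\leftrightarrow A_m,\dots$ and the column range $3,\dots,m$ match after the shift). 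No parameter survives, so $c_{m,b}=c_{m-1,a}$; the inequality $c_{m-1,a}\le c_{m,b}$ also follows at once from Lemma \ref{subblocks}.

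Next I would prove $c_{m,a}=c_{m-1,b}+1$. Deleting $X_{1,2}$ gives the configuration $(m-1,b)$ with the same diagonal blocks ($r=m+1$ in both), so Lemma \ref{subblocks} yields $c_{m-1,b}\le c_{m,a}$. The extra unit comes from the coupling created by $X_{1,2}$: its column-block $A_2$ is tied to $A_m=A_{r-1}=(A_2^S)^{-1}$, which acts on the surviving block $X_{1,m}$ (the rightmost first-row block, $m=r-1$). Writing $v=X_{1,2}$ and $w=X_{1,m}$, both $1\times k_2$ and transforming by $v\mapsto a_1 vA_2^{-1}$, $w\mapsto a_1 wA_2^{S}$, the quantity $v\,J\,w^{T}$ (with $J$ the anti-diagonal matrix of the form, so $A_2^{S}=JA_2^{T}J$) is invariant under $A_2$ and scales only by $a_1^{2}$. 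Once the $(m-1,b)$-part is normalized, with $w$ brought to $(1,0,\dots,0)$, this invariant is simply the last coordinate of $X_{1,2}$; it is the unique coupling of the new block to the rest, so exactly one continuous parameter remains, giving the $+1$.

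The main obstacle is the stabilizer bookkeeping through the anti-transpose symmetry: normalizing a first-row block near $A_1$ constrains its partner near $A_r$, and one must check that in case $b$ this partner meets no surviving block (hence no residual), whereas in case $a$ it meets precisely the one block $X_{1,m}$ and contributes precisely the single invariant $v\,J\,w^{T}$, neither more nor fewer. This is exactly where $m\ge 4$ is used: it forces $r\ge 5$, so that $X_{1,2}$ and $X_{1,m}$ (respectively $X_{1,m+1}$) and their coupled diagonal blocks stay distinct and away from the central block (orthogonal, when $r$ is odd), making the inductive step uniform; for $m\le 3$ the relevant blocks reach the centre and the cases must be settled directly, as recorded in the tables.
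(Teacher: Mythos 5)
Your strategy is essentially an inductive repackaging of the paper's argument: the paper normalizes all first-row blocks at once (each symmetric pair $(X_{1,i},X_{1,r+1-i})$ contributes one relative invariant $xy$, a central block one more, all of weight $a_1^{2}$ under the left scalar $A_1=a_1$), obtains the closed count ``complexity $=$ number of pairs $+$ number of central blocks $-1$,'' and reads off both recursions from it; you prove the two recursions directly instead. Your first reduction, $c_{m,b}=c_{m-1,a}$, is correct: the unpaired block $X_{1,m+1}$ is normalized by the Borel of $GL_{k_{m+1}}$ alone, its partner $A_2$ meets no independent block, and the stabilizer of the normal form surjects onto the full group of the $(m-1,a)$-configuration.

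The gap is in the second reduction, at the step ``exactly one continuous parameter remains.'' As you note yourself, $vJw^{T}$ is only a \emph{relative} invariant: it scales by $a_1^{2}$. It survives as a genuine parameter only if the stabilizer of the normalized $(m-1,b)$-part pins $a_1$ down up to sign, and this happens exactly when $(m-1,b)$ contains at least one genuine pair or central block, i.e.\ at least one other relative invariant of weight $a_1^{2}$ against which $vJw^{T}$ can be normalized. That is the true role of the hypothesis $m\geqslant 4$ (it makes the set of columns $\{3,\dots,m-1\}$ nonempty); your stated reason --- that for $m\leqslant 3$ ``the relevant blocks reach the centre'' --- is not the right one. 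Indeed, for $m=3$ in case $(a)$ one has $r=4$, there is no central block at all, $X_{1,2}$, $X_{1,3}$ and their coupled diagonal blocks are distinct, and yet the recursion fails: $c_{3,a}=0$ while $c_{2,b}+1=1$. The reason is precisely the one your argument overlooks: in $(2,b)$ there is no other invariant, so $a_1$ stays free in the stabilizer and rescales the last coordinate of $X_{1,2}$ to $1$, killing your parameter. As written, your argument would equally ``prove'' the false equality $c_{3,a}=c_{2,b}+1$. To close the gap, add the observation that for $m\geqslant 4$ a generic point of the $(m-1,b)$-configuration has a nonzero relative invariant of weight $a_1^{2}$ (coming from a pair or a central block), so its stabilizer satisfies $a_1^{2}=1$; after that your computation of the residual $A_2$-action on $v$ is correct and gives exactly one new parameter.
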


\begin{proof}
We can put a general  pair of blocks $X_{1,i}$ and $X_{1,r+1-i}$ ($i \neq \frac{r+1}{2}$) to the form  $(x, 0, \dots, 0)$ and $(t, 0, \dots, 0, y)$  by multiplication on the right if the widths of the blocks are at least $2$, and to the form  $(x)$ and $(y)$ if the widths are equal to $1$. Here $t$, $x$ are any nonzero numbers and $y$ is determined by $x$, namely the product $xy$ is invariant. A nonzero general block $X_{1, \frac{r+1}{2}}$ (if $r$ is odd) can be put in the form  $(x, 0 \dots 0, y)$ if the width of the block is at least $2$, (where  $x$ and $y$ are as above). If the width of this block equals $1$ then we cannot change the entry in this block. All these matrices are multiplied on the left by one and the same number. Thus the complexity equals the number of these pairs plus the number of central blocks ($0$ or~$1$) minus~$1$. From this, we obtain the lemma.
\end{proof}

In Cases $5$ and $6$ we can easily compute complexities using this lemma, and $c \geqslant 2$ for $m \geqslant 7$.

%------------------5----------------------------------

\noindent \raisebox{51pt}{5a.}
\begin{picture}(65,65)
\gs{50}{50}{5}
\gsdown{50}{50}{5}
\bs{0}{50}{6}

\put(60,60){\line(-1,0){50}} \put(60,50){\line(-1,0){50}}
\put(60,40){\line(-1,0){10}} \put(60,30){\line(-1,0){10}}
\put(60,20){\line(-1,0){10}} \put(60,10){\line(-1,0){10}}
\put(60,60){\line(0,-1){50}} \put(50,60){\line(0,-1){50}}
\put(40,60){\line(0,-1){10}} \put(30,60){\line(0,-1){10}}
\put(20,60){\line(0,-1){10}} \put(10,60){\line(0,-1){10}}

\end{picture}
\quad \raisebox{50pt}{
\parbox[t]{5cm}{
%4б
$k_1=1$ \qquad  $c=1$

}}

\noindent \raisebox{61pt}{5b.}
\begin{picture}(75,75)
\gs{60}{60}{5}
\gsdown{60}{60}{5}
\bs{0}{60}{7}

\put(70,70){\line(-1,0){50}} \put(70,60){\line(-1,0){50}}
\put(70,50){\line(-1,0){10}} \put(70,40){\line(-1,0){10}}
\put(70,30){\line(-1,0){10}} \put(70,20){\line(-1,0){10}}
\put(70,70){\line(0,-1){50}} \put(60,70){\line(0,-1){50}}
\put(50,70){\line(0,-1){10}} \put(40,70){\line(0,-1){10}}
\put(30,70){\line(0,-1){10}} \put(20,70){\line(0,-1){10}}

\end{picture}
\quad \raisebox{60pt}{
\parbox[t]{5cm}{
%4a
$k_1=1$ \qquad  $c=1$

}}

%------------------------6---------------------------------------

\noindent \raisebox{61pt}{6a.}
\begin{picture}(75,75)
\gs{60}{60}{6}
\gsdown{60}{60}{6}
\bs{0}{60}{7}

\put(70,70){\line(-1,0){60}} \put(70,60){\line(-1,0){60}}
\put(70,50){\line(-1,0){10}} \put(70,40){\line(-1,0){10}}
\put(70,30){\line(-1,0){10}} \put(70,20){\line(-1,0){10}}
\put(70,10){\line(-1,0){10}} \put(70,70){\line(0,-1){60}}
\put(60,70){\line(0,-1){60}} \put(50,70){\line(0,-1){10}}
\put(40,70){\line(0,-1){10}} \put(30,70){\line(0,-1){10}}
\put(20,70){\line(0,-1){10}} \put(10,70){\line(0,-1){10}}

\end{picture}
\quad \raisebox{60pt}{
\parbox[t]{5cm}{
%5б
$k_1=1$ \qquad $c = 2$ }}

\noindent \raisebox{71pt}{6b.}
\begin{picture}(85,85)
\gs{70}{70}{6}
\gsdown{70}{70}{6}
\bs{0}{70}{8}

\put(80,80){\line(-1,0){60}} \put(80,70){\line(-1,0){60}}
\put(80,60){\line(-1,0){10}} \put(80,50){\line(-1,0){10}}
\put(80,40){\line(-1,0){10}} \put(80,30){\line(-1,0){10}}
\put(80,20){\line(-1,0){10}} \put(80,80){\line(0,-1){60}}
\put(70,80){\line(0,-1){60}} \put(60,80){\line(0,-1){10}}
\put(50,80){\line(0,-1){10}} \put(40,80){\line(0,-1){10}}
\put(30,80){\line(0,-1){10}} \put(20,80){\line(0,-1){10}}

\end{picture}
\quad \raisebox{70pt}{
\parbox[t]{5cm}{
%5a
$k_1=1$ \qquad  $c=1$ }}

%-----------------------------7----------------------------------

\noindent \raisebox{31pt}{7.}
\begin{picture}(45,45)
\gs{30}{30}{2}
\gs{30}{20}{2}
\bs{0}{30}{4}
\put(40,40){\line(-1,0){20}} \put(40,30){\line(-1,0){20}}
\put(40,20){\line(-1,0){20}} \put(40,40){\line(0,-1){20}}
\put(30,40){\line(0,-1){20}} \put(20,40){\line(0,-1){20}}
\end{picture}
\quad \raisebox{40pt}{
\parbox[t]{10cm}{
\begin{tabbing}
$k_1=1$ \qquad  \qquad \qquad  \=  $k_2$ arbitrary \qquad  \=  $c=0$ \\
$k_1$ arbitrary \> $k_2=1$                \>  $c=0$ \\
$k_1=2$           \>  $k_2=2$                   \> $c=1$  \\
$k_1=2$           \> $k_2=3$                \> $c=1$  \\
$k_1=3$           \> $k_2=2$                     \> $c = 1$ \\

\end{tabbing}

}}

%-------------------------8----------------------------------------

\noindent \raisebox{31pt}{8.}
\begin{picture}(45,45)

\gs{30}{30}{3}
\gs{30}{20}{2}
\gs{30}{10}{1}
\bs{0}{30}{4}
\put(40,40){\line(-1,0){30}} \put(40,30){\line(-1,0){30}}
\put(40,20){\line(-1,0){20}} \put(40,10){\line(-1,0){10}}
\put(40,40){\line(0,-1){30}} \put(30,40){\line(0,-1){30}}
\put(20,40){\line(0,-1){20}} \put(10,40){\line(0,-1){10}}
\end{picture}
\quad \raisebox{40pt}{
\parbox[t]{5cm}{
%3a
\begin{tabbing}

$k_1=1$ \qquad  \=  $k_2=1$ \qquad   \=   $c=0$  \\

$k_1=1$     \>  $k_2=2$          \>  $c=1$   \\

$k_1=2$         \>  $k_2=1$         \>    $c=1$   \\

\end{tabbing}

}}

If we add blocks only to the first row (and add blocks symmetrical to them with respect to the secondary diagonal), then these cases do not appear.

%----------------------9-------------------------------------------

\noindent \raisebox{41pt}{9a.}
\begin{picture}(55,55)
\gs{40}{40}{3}
\gs{40}{30}{3}
\gs{40}{20}{2}
\bs{0}{40}{5}

\put(50,50){\line(-1,0){30}} \put(50,40){\line(-1,0){30}}
\put(50,30){\line(-1,0){30}} \put(50,20){\line(-1,0){20}}
\put(50,50){\line(0,-1){30}} \put(40,50){\line(0,-1){30}}
\put(30,50){\line(0,-1){30}} \put(20,50){\line(0,-1){20}}

\end{picture}
\quad \raisebox{50pt}{
\parbox[t]{5cm}{
%
%3б
%
\begin{tabbing}

$k_1=1$ \qquad       \= $k_2=1$ \qquad \qquad \qquad \= $k_3=1$ \qquad \qquad \qquad \=   $c=1$  \\

\end{tabbing}

}}

\noindent \raisebox{51pt}{9b.}
\begin{picture}(65,65)
\gs{50}{50}{3}
\gs{50}{40}{3}
\gs{50}{30}{2}
\bs{0}{50}{6}

\put(60,60){\line(-1,0){30}} \put(60,50){\line(-1,0){30}}
\put(60,40){\line(-1,0){30}} \put(60,30){\line(-1,0){20}}
\put(60,60){\line(0,-1){30}} \put(50,60){\line(0,-1){30}}
\put(40,60){\line(0,-1){30}} \put(30,60){\line(0,-1){20}}

\end{picture}
\quad \raisebox{60pt}{
\parbox[t]{5cm}{
%
%7
%
\begin{tabbing}

$k_1=1$ \qquad   \= $k_2=1$ \qquad \quad \= $k_3$ arbitrary \quad  \=  $c=0$   \\

$k_1=1$          \> $k_2=2$      \> $k_3=1$                \> $c=1$   \\

$k_1=2$          \> $k_2=1$        \> $k_3=1$                \>  $c=1$  \\

\end{tabbing}

}}

%------------------10----------------------------------------------------

\noindent \raisebox{41pt}{10a.}
\begin{picture}(55,55)
\gs{40}{40}{4}
\gs{40}{30}{3}
\gs{40}{20}{2}
\gs{40}{10}{1}
\bs{0}{40}{5}

\put(50,50){\line(-1,0){40}} \put(50,40){\line(-1,0){40}}
\put(50,30){\line(-1,0){30}} \put(50,20){\line(-1,0){20}}
\put(50,10){\line(-1,0){10}} \put(50,50){\line(0,-1){40}}
\put(40,50){\line(0,-1){40}} \put(30,50){\line(0,-1){30}}
\put(20,50){\line(0,-1){20}} \put(10,50){\line(0,-1){10}}

\end{picture}
\quad \raisebox{30pt}{
\parbox[t]{12cm}{
$c \geqslant 2$, because there are two invariants of type ``triangle''
}}

\noindent \raisebox{51pt}{10b.}
\begin{picture}(65,65)
\gs{50}{50}{4}
\gs{50}{40}{3}
\gs{50}{30}{2}
\gs{50}{20}{1}
\bs{0}{50}{6}

\put(60,60){\line(-1,0){40}} \put(60,50){\line(-1,0){40}}
\put(60,40){\line(-1,0){30}} \put(60,30){\line(-1,0){20}}
\put(60,20){\line(-1,0){10}} \put(60,60){\line(0,-1){40}}
\put(50,60){\line(0,-1){40}} \put(40,60){\line(0,-1){30}}
\put(30,60){\line(0,-1){20}} \put(20,60){\line(0,-1){10}}

\end{picture}
\quad \raisebox{60pt}{
\parbox[t]{5cm}{
\begin{tabbing}

$k_1=1$ \qquad   \= $k_2=1$ \qquad \= $k_3=1$ \qquad  \=  $c=1$   \\
\end{tabbing}

}}

%--------------11-------------------------------------------------------------

\noindent \raisebox{51pt}{11.}
\begin{picture}(65,65)
\gs{50}{50}{5}
\gs{50}{40}{3}
\gs{50}{30}{2}
\gs{50}{20}{1}
\gs{50}{10}{1}
\bs{0}{50}{6}

\put(60,60){\line(-1,0){50}} \put(60,50){\line(-1,0){50}}
\put(60,40){\line(-1,0){30}} \put(60,30){\line(-1,0){20}}
\put(60,20){\line(-1,0){10}} \put(60,10){\line(-1,0){10}}
\put(60,60){\line(0,-1){50}} \put(50,60){\line(0,-1){50}}
\put(40,60){\line(0,-1){30}} \put(30,60){\line(0,-1){20}}
\put(20,60){\line(0,-1){10}} \put(10,60){\line(0,-1){10}}

\end{picture}
\quad \raisebox{50pt}{
\parbox[t]{12cm}{

$c \geqslant 2$, because there are two invariants of type ``triangle''

}}

%-----------------------------------------------

\noindent If we add blocks only to the first row (and add blocks symmetrical to them), then these cases do not appear.

%----------------12--------------------------------------------

\noindent \raisebox{51pt}{12a.}
\begin{picture}(65,65)

\gs{50}{50}{4}
\gs{50}{40}{4}
\gs{50}{30}{2}
\gs{50}{20}{2}
\bs{0}{50}{6}

\put(60,60){\line(-1,0){40}} \put(60,50){\line(-1,0){40}}
\put(60,40){\line(-1,0){40}} \put(60,30){\line(-1,0){20}}
\put(60,20){\line(-1,0){20}} \put(60,60){\line(0,-1){40}}
\put(50,60){\line(0,-1){40}} \put(40,60){\line(0,-1){40}}
\put(30,60){\line(0,-1){20}} \put(20,60){\line(0,-1){20}}

\end{picture}
\quad \raisebox{50pt}{
\parbox[t]{9cm}{
$c \geqslant 2$, because there are invariants of types ``square'' and ``triangle''
}}

\noindent \raisebox{61pt}{12b.}
\begin{picture}(75,75)
\gs{60}{60}{4}
\gs{60}{50}{4}
\gs{60}{40}{2}
\gs{60}{30}{2}
\bs{0}{60}{7}

\put(70,70){\line(-1,0){40}} \put(70,60){\line(-1,0){40}}
\put(70,50){\line(-1,0){40}} \put(70,40){\line(-1,0){20}}
\put(70,30){\line(-1,0){20}} \put(70,70){\line(0,-1){40}}
\put(60,70){\line(0,-1){40}} \put(50,70){\line(0,-1){40}}
\put(40,70){\line(0,-1){20}} \put(30,70){\line(0,-1){20}}

\end{picture}
\quad \raisebox{60pt}{
\parbox[t]{9cm}{
$c \geqslant 2$, because there are invariants of types ``square'' and ``triangle''
}}

%-------------------------------------------------
\noindent If we add one or two blocks to the first row (and add blocks symmetrical to them), then $c \geqslant 2$ (reduces to Case $12$).

\noindent If we add more than two blocks to the first row (and add blocks symmetrical to them), then these cases do not appear.

\noindent If we add blocks to the second row (and to the first row, respectively, and add blocks symmetrical to them), then there are two invariants of type ``square''.

%-----------13----------------------------------

\noindent \raisebox{51pt}{13.}
\begin{picture}(65,65)
\gs{50}{50}{3}
\gs{50}{40}{3}
\gs{50}{30}{3}
\bs{0}{50}{6}

\put(60,60){\line(-1,0){30}} \put(60,50){\line(-1,0){30}}
\put(60,40){\line(-1,0){30}} \put(60,30){\line(-1,0){30}}
\put(60,60){\line(0,-1){30}} \put(50,60){\line(0,-1){30}}
\put(40,60){\line(0,-1){30}} \put(30,60){\line(0,-1){30}}

\end{picture}
\quad \raisebox{60pt}{
\parbox[t]{10cm}{

\begin{tabbing}

$k_1=1$ \qquad \= $k_2=1$ \qquad \= $k_3=1$ \qquad   \= $c=0$ \\

$k_1=1$        \> $k_2=1$        \> $k_3=2$           \> $c=1$ \\

$k_1=1$        \> $k_2=2$        \> $k_3=1$           \> $c=1$ \\

$k_1=2$        \> $k_2=1$        \> $k_3=1$           \> $c=1$ \\

\end{tabbing}
}}

%---------------14------------------------------------

\noindent \raisebox{51pt}{14.}
\begin{picture}(65,65)
\gs{50}{50}{4}
\gs{50}{40}{3}
\gs{50}{30}{3}
\gs{50}{20}{1}
\bs{0}{50}{6}

\put(60,60){\line(-1,0){40}} \put(60,50){\line(-1,0){40}}
\put(60,40){\line(-1,0){30}} \put(60,30){\line(-1,0){30}}
\put(60,20){\line(-1,0){10}} \put(60,60){\line(0,-1){40}}
\put(50,60){\line(0,-1){40}} \put(40,60){\line(0,-1){30}}
\put(30,60){\line(0,-1){30}} \put(20,60){\line(0,-1){10}}

\end{picture}
\quad \raisebox{60pt}{
\parbox[t]{5cm}{
%
%13
%
%
\begin{tabbing}

$k_1=1$ \qquad \= $k_2=1$ \qquad \= $k_3=1$ \qquad   \= $c=1$ \\

\end{tabbing}
}}

%---------------------15-----------------------------------------

\noindent \raisebox{51pt}{15.}
\begin{picture}(65,65)
\gs{50}{50}{5}
\gs{50}{40}{3}
\gs{50}{30}{3}
\gs{50}{20}{1}
\gs{50}{10}{1}
\bs{0}{50}{6}

\put(60,60){\line(-1,0){50}} \put(60,50){\line(-1,0){50}}
\put(60,40){\line(-1,0){30}} \put(60,30){\line(-1,0){30}}
\put(60,20){\line(-1,0){10}} \put(60,10){\line(-1,0){10}}
\put(60,60){\line(0,-1){50}} \put(50,60){\line(0,-1){50}}
\put(40,60){\line(0,-1){30}} \put(30,60){\line(0,-1){30}}
\put(20,60){\line(0,-1){10}} \put(10,60){\line(0,-1){10}}

\end{picture}
\quad \raisebox{50pt}{
\parbox[t]{12cm}{
$c \geqslant 2$, because this case reduces to Case $11$ by Lemma~\ref{subblocks}
}}

%------------------------------

\noindent If we add  blocks to the first row (and add blocks symmetrical to them), then these cases do not appear.

\noindent If there are $3$ blocks in the third row, $4$ blocks in the second row, and $4$ or $5$ in the first row, then $c \geqslant 2$ (this follows from Case $12$a). If there are $\geqslant 6$ blocks in the first row, then these cases do not appear.

\noindent If there are $\geqslant 5$ blocks in the second row, then there are two invariants of type ``square''.

\noindent If there are  $\geqslant 4$ blocks in the third row then there are two invariants of type ``square''.

%------------------------------------------------------------------
\bigskip
Now consider the case of special subgroups.

The numeration corresponds to the numeration of cases for non-special subgroups, obtained by replacing a special subgroup with a non-special one by conjugation with transposition of two middle basis vectors and by replacing two middle blocks with one central block. It is sufficient to consider the cases for which the size of two middle blocks obtained by this transformation is not greater than the size of two middle blocks for another subgroup.

\medskip

\noindent \raisebox{11pt}{0.}
\begin{picture}(25,25)

\put(13,13){\graybox{7}{7}}

\put(20,20){\line(0,-1){7}} \put(20,20) {\line(-1,0){7}}
\put(13,13){\line(0,1){7}} \put(13,13) {\line(1,0){7}}

\put(0,13){\blackbox{7}{7}}
\put(7,10){\blackbox{3}{3}}
\put(10,7){\blackbox{3}{3}}
\put(13,0){\blackbox{7}{7}}

\end{picture}\quad \raisebox{10pt}{ $c=0$ }

\medskip

\noindent \raisebox{31pt}{2b.}
\begin{picture}(45,45)
\put(30,30){\graybox{10}{10}}
\put(20,30){\graybox{10}{10}}
\put(30,20){\graybox{10}{10}}
\put(23,23){\graybox{7}{7}}

\put(40,40){\line(-1,0){20}} \put(40,30){\line(-1,0){20}}
\put(40,20){\line(-1,0){10}} \put(40,40){\line(0,-1){20}}
\put(30,40){\line(0,-1){20}} \put(20,40){\line(0,-1){10}}

\put(23,23){\line(0,1){7}} \put(23,23){\line(1,0){7}}

\put(23,30) {\line(0,1){1}} \put(23,32) {\line(0,1){2}} \put(23,35)
{\line(0,1){2}} \put(23,38) {\line(0,1){2}}

\put(30,23) {\line(1,0){1}} \put(32,23) {\line(1,0){2}} \put(35,23)
{\line(1,0){2}} \put(38,23) {\line(1,0){2}}

\put(0,30){\blackbox{10}{10}}
\put(10,23){\blackbox{7}{7}}
\put(17,20){\blackbox{3}{3}}
\put(20,17){\blackbox{3}{3}}
\put(23,10){\blackbox{7}{7}}
\put(30,0){\blackbox{10}{10}}

\end{picture}
\quad \raisebox{40pt}{
\parbox[t]{5cm}{

\begin{tabbing}
$k_1=1$ \qquad \= $k_2$ arbitrary \quad \=  $c=0$ \\

$k_1=2$   \> $k_2=1$    \> $c=0$   \\

$k_1=2$   \> $k_2=2$    \> $c=1$   \\

$k_1=3$   \> $k_2=1$    \> $c=1$   \\

\end{tabbing}

}}

%------------------------------------------------------

\noindent \raisebox{31pt}{3a.}
\begin{picture}(45,45)

\put(30,30){\graybox{10}{10}}
\put(20,30){\graybox{10}{10}}
\put(10,30){\graybox{10}{10}}
\put(30,20){\graybox{10}{10}}
\put(30,10){\graybox{10}{10}}
\put(23,23){\graybox{7}{7}}

\put(40,40){\line(-1,0){30}} \put(40,30){\line(-1,0){30}}
\put(40,20){\line(-1,0){10}} \put(40,10){\line(-1,0){10}}
\put(40,40){\line(0,-1){30}} \put(30,40){\line(0,-1){30}}
\put(20,40){\line(0,-1){10}} \put(10,40){\line(0,-1){10}}

\put(23,23){\line(0,1){7}} \put(23,23){\line(1,0){7}}

\put(23,30) {\line(0,1){1}} \put(23,32) {\line(0,1){2}} \put(23,35)
{\line(0,1){2}} \put(23,38) {\line(0,1){2}}

\put(17,30) {\line(0,1){1}} \put(17,32) {\line(0,1){2}} \put(17,35)
{\line(0,1){2}} \put(17,38) {\line(0,1){2}}

\put(30,23) {\line(1,0){1}} \put(32,23) {\line(1,0){2}} \put(35,23)
{\line(1,0){2}} \put(38,23) {\line(1,0){2}}

\put(30,17) {\line(1,0){1}} \put(32,17) {\line(1,0){2}} \put(35,17)
{\line(1,0){2}} \put(38,17) {\line(1,0){2}}

\put(0,30){\blackbox{10}{10}}
\put(10,23){\blackbox{7}{7}}
\put(17,20){\blackbox{3}{3}}
\put(20,17){\blackbox{3}{3}}
\put(23,10){\blackbox{7}{7}}
\put(30,0){\blackbox{10}{10}}

\end{picture}
\quad \raisebox{40pt}{
\parbox[t]{5cm}{

\begin{tabbing}
$k_1=1$ \qquad   \= $k_2=1$ \qquad   \= $c=1$  \\
\end{tabbing}
}}

%-------------------------------------------------------------

\noindent \raisebox{51pt}{9b.}
\begin{picture}(65,65)

\put(30,40){\graybox{30}{20}}
\put(40,30){\graybox{20}{10}}
\put(33,33){\graybox{7}{7}}

\put(60,60){\line(-1,0){30}} \put(60,50){\line(-1,0){30}}
\put(60,40){\line(-1,0){30}} \put(60,30){\line(-1,0){20}}
\put(60,60){\line(0,-1){30}} \put(50,60){\line(0,-1){30}}
\put(40,60){\line(0,-1){30}} \put(30,60){\line(0,-1){20}}

\put(33,33){\line(0,1){7}} \put(33,33){\line(1,0){7}}

\put(33,41) {\line(0,1){2}} \put(33,44) {\line(0,1){2}} \put(33,47)
{\line(0,1){2}} \put(33,51) {\line(0,1){2}} \put(33,54)
{\line(0,1){2}} \put(33,57) {\line(0,1){2}}

\put(41,33) {\line(1,0){2}} \put(44,33) {\line(1,0){2}} \put(47,33)
{\line(1,0){2}} \put(51,33) {\line(1,0){2}} \put(54,33)
{\line(1,0){2}} \put(57,33) {\line(1,0){2}}

\put(0,50){\blackbox{10}{10}}
\put(10,40){\blackbox{10}{10}}
\put(20,33){\blackbox{7}{7}}
\put(27,30){\blackbox{3}{3}}
\put(30,27){\blackbox{3}{3}}
\put(33,20){\blackbox{7}{7}}
\put(40,10){\blackbox{10}{10}}
\put(50,0){\blackbox{10}{10}}

\end{picture}
\quad \raisebox{60pt}{
\parbox[t]{5cm}{

\begin{tabbing}
$k_1=1$ \qquad   \= $k_2=1$ \qquad   \= $k_3=1$ \qquad   \= $c=1$  \\

\end{tabbing}

}}

\noindent {10b.} If we want the complexity to be $\leqslant 1$, then it is necessary to have  $k_3 = 1$. As one middle block of size $2$ is the same as two middle blocks of sizes $1$ and $1$, then we may assume that in Case $10$b (for non-special subgroups) both subgroups have two middle blocks, so we do not need to consider this case.

Combining all cases together we obtain the classification given in Table~\ref{so}.

\subsection{Group $\mathbf{Sp_n}$}

The numbers of cases correspond to the numbers of cases for $SO_n$.

%%-----------1------sp----------------------------------

\noindent \raisebox{11pt}{1.}
\begin{picture}(25,25)
\gs{10}{10}{1}
\bs{0}{10}{2}
\put(10,10){\line(1,0){10}} \put(10,20){\line(1,0){10}}
\put(10,10){\line(0,1){10}} \put(20,10){\line(0,1){10}}

\end{picture}
\quad \raisebox{10pt}{ $c=0$ }

%%------------2--------sp-----------------------------------------

\noindent \raisebox{21pt}{2a.}
\begin{picture}(35,35)

\gs{20}{20}{2}
\gs{20}{10}{1}
\bs{0}{20}{3}
\put(30,30){\line(-1,0){20}} \put(30,20){\line(-1,0){20}}
\put(30,10){\line(-1,0){10}} \put(30,30){\line(0,-1){20}}
\put(20,30){\line(0,-1){20}} \put(10,30){\line(0,-1){10}}
\end{picture}
\quad \raisebox{30pt}{
\parbox[t]{5cm}{

\begin{tabbing}

$k_1=1$ \qquad \= $k_2$ arbitrary \qquad    \= $c=0$  \\

\end{tabbing}

}}

\noindent \raisebox{31pt}{2b.}
\begin{picture}(45,45)
\gs{30}{30}{2}
\gs{30}{20}{1}
\bs{0}{30}{4}
\put(40,40){\line(-1,0){20}} \put(40,30){\line(-1,0){20}}
\put(40,20){\line(-1,0){10}} \put(40,40){\line(0,-1){20}}
\put(30,40){\line(0,-1){20}} \put(20,40){\line(0,-1){10}}

\end{picture}
\quad \raisebox{40pt}{
\parbox[t]{5cm}{

\begin{tabbing}
$k_1=1$ \qquad \= $k_2$ arbitrary \quad \=  $c=0$ \\
$k_1=2$ \qquad \> $k_2$ arbitrary   \> $c=1$ \\
\end{tabbing}

}}

%%------------3------sp----------------------------

\noindent \raisebox{31pt}{3a.}
\begin{picture}(45,45)
\gs{30}{30}{3}
\gs{30}{20}{1}
\gs{30}{10}{1}
\bs{0}{30}{4}
\put(40,40){\line(-1,0){30}} \put(40,30){\line(-1,0){30}}
\put(40,20){\line(-1,0){10}} \put(40,10){\line(-1,0){10}}
\put(40,40){\line(0,-1){30}} \put(30,40){\line(0,-1){30}}
\put(20,40){\line(0,-1){10}} \put(10,40){\line(0,-1){10}}

\end{picture}
\quad \raisebox{40pt}{
\parbox[t]{5cm}{
%      2б
%
\begin{tabbing}
$k_1=1$ \qquad \= $k_2$ arbitrary  \quad \=  $c=1$ \\
\end{tabbing}

}}

\noindent \raisebox{41pt}{3b.}
\begin{picture}(55,55)
\gs{40}{40}{3}
\gs{40}{30}{1}
\gs{40}{20}{1}
\bs{0}{40}{5}
\put(50,50){\line(-1,0){30}} \put(50,40){\line(-1,0){30}}
\put(50,30){\line(-1,0){10}} \put(50,20){\line(-1,0){10}}
\put(50,50){\line(0,-1){30}} \put(40,50){\line(0,-1){30}}
\put(30,50){\line(0,-1){10}} \put(20,50){\line(0,-1){10}}

\end{picture}
\quad \raisebox{50pt}{
\parbox[t]{5cm}{
%
%2a
%
\begin{tabbing}
$k_1=1$ \qquad \=  $k_2$ arbitrary \quad  \= $k_3$ arbitrary \quad \= $c=0$  \\
\end{tabbing}

}}

%%------------------------------------------------------

If there are at least $4$ blocks in the first row, then by Lemma~\ref{in_one_row_3_4} their height cannot be greater than one.

Suppose nonzero blocks stand only in the first row and in the last column (denote the number of blocks in the first row by $m$) and suppose that their height is $k_1 = 1$. Denote the complexity for this case by $c_{m,a}$ if $r = m + 1$ and by $c_{m,b}$ if $r = m + 2$ (here $r$ is the number of diagonal blocks). Then we have the following lemma.

\begin{lemma}
Suppose $m \geqslant 3$. We have $c_{m,a}=c_{m-1,b}+1$, $c_{m,b}=c_{m-1,a}$.
\label{so_k1=1}
\end{lemma}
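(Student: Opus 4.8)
The plan is to follow the reduction used for the analogous orthogonal lemma above, adjusting the bookkeeping for the symplectic symmetry of $\mathfrak{p}_u \cap \mathfrak{q}_u$. The last-column blocks $X_{i,r}$ are not independent data: since the symmetric diagonal blocks of $L \cap M$ satisfy $A_{r+1-i} = (A_i^S)^{-1}$ and the matrices of $\mathfrak{p}_u \cap \mathfrak{q}_u$ are (anti)symmetric with respect to the secondary diagonal, each $X_{i,r}$ equals $\pm X_{1,r+1-i}^S$. Hence the free data are the first-row row-vectors $X_{1,j}$ together with the $1 \times 1$ corner $X_{1,r}$, and the acting group $B \cap L \cap M$ operates through the scalar $a_1 = A_1$ (with $A_r = a_1^{-1}$) and the upper-triangular blocks $A_2, \dots, A_{r-1}$, where $A_{r+1-i}$ is determined by $A_i$.

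First I would reduce the symmetric pairs. For present first-row blocks $X_{1,i}$ and $X_{1,r+1-i}$ with $2 \leqslant i < r+1-i \leqslant r-1$, the single matrix $A_i$ acts on the right of $X_{1,i}$ by $A_i^{-1}$ and on the right of $X_{1,r+1-i}$ by $A_{r+1-i}^{-1} = A_i^S$. Exactly as in the orthogonal lemma one brings $X_{1,i}$ to the form $(x,0,\dots,0)$ and then $X_{1,r+1-i}$ to $(t,0,\dots,0,y)$, the product $xy$ being the only invariant; thus every such pair contributes exactly one continuous parameter. Distinct pairs use distinct matrices $A_i$, so these reductions are independent, the only shared resource being the scalar $a_1$.

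Two symplectic features change the count, and this is where I expect the genuine work. First, the corner transforms by $X_{1,r} \mapsto a_1^2 X_{1,r}$, so a general nonzero corner can be normalised to $1$; this pins $a_1$ down (up to sign) and consumes the global scaling. In the orthogonal setting the corner vanishes on the secondary diagonal, and it was precisely this leftover scaling that produced the ``$-1$'' in the orthogonal count; here it is absorbed instead. Second, when $r$ is odd the central block $A_{(r+1)/2}$ is itself symplectic, so it acts on the central row-vector $X_{1,(r+1)/2}$ through the Borel of $Sp_{k_{(r+1)/2}}$, which has finitely many orbits on the projectivised covectors; a general central vector then lies in the dense orbit and is reduced with no remaining parameter, unlike the orthogonal central block which yields a modulus. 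The main obstacle is to carry out these two orbit computations carefully and to confirm that normalising the corner with $a_1$ does not disturb the per-pair reductions --- it does not, since the torus inside each $A_i$ still rescales the leading entry of $X_{1,i}$.

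Putting these together, the complexity equals the number of symmetric pairs $\{i, r+1-i\}$ both of whose first-row blocks are present. In case (a), where $r = m+1$ and the first-row columns are $2, \dots, m+1$, the present pairs are $\{i, m+2-i\}$ with $2 \leqslant i < (m+2)/2$; in case (b), where $r = m+2$ and the columns are $3, \dots, m+2$, the missing column $2$ breaks the outermost pair $\{2, m+1\}$. Counting these pairs in each case for $m \geqslant 3$ then gives $c_{m,a} = c_{m-1,b} + 1$ and $c_{m,b} = c_{m-1,a}$, as claimed.
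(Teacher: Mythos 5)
Your proposal is correct and takes essentially the same approach as the paper: the paper's own proof of this lemma is literally ``similar to the proof of the orthogonal lemma,'' and your argument is exactly that adaptation, correctly identifying the two genuinely symplectic modifications --- the nonzero $1\times 1$ corner block transforming by $a_1^2$, which absorbs the global scaling responsible for the ``$-1$'' in the orthogonal count, and the symplectic central block contributing no modulus --- so that the complexity becomes the number of symmetric pairs present. The resulting pair count reproduces both recursions (and matches all tabulated $Sp_n$ values), so nothing essential is missing.
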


The proof is similar to the proof of Lemma~\ref{so_k1=1}.

In Cases $4$ and $5$ we can easily compute complexities using this lemma, and $c \geqslant 2$ for $m \geqslant 6$.

%%-----------------4-----sp-----------------------------------------------

\noindent \raisebox{41pt}{4a.}
\begin{picture}(55,55)

\gs{40}{40}{4}
\gsdown{40}{40}{4}
\bs{0}{40}{5}

\put(50,50){\line(-1,0){40}} \put(50,40){\line(-1,0){40}}
\put(50,30){\line(-1,0){10}} \put(50,20){\line(-1,0){10}}
\put(50,10){\line(-1,0){10}} \put(50,50){\line(0,-1){40}}
\put(40,50){\line(0,-1){40}} \put(30,50){\line(0,-1){10}}
\put(20,50){\line(0,-1){10}} \put(10,50){\line(0,-1){10}}

\end{picture}
\quad \raisebox{50pt}{
\parbox[t]{5cm}{
%
%3б
\begin{tabbing}
$k_1=1$ \qquad \=  $c=1$  \\

\end{tabbing}

}}

\noindent \raisebox{51pt}{4b.}
\begin{picture}(65,65)
\gs{50}{50}{4}
\gsdown{50}{50}{4}
\bs{0}{50}{6}

\put(60,60){\line(-1,0){40}} \put(60,50){\line(-1,0){40}}
\put(60,40){\line(-1,0){10}} \put(60,30){\line(-1,0){10}}
\put(60,20){\line(-1,0){10}} \put(60,60){\line(0,-1){40}}
\put(50,60){\line(0,-1){40}} \put(40,60){\line(0,-1){10}}
\put(30,60){\line(0,-1){10}} \put(20,60){\line(0,-1){10}}
\end{picture}
\quad \raisebox{60pt}{
\parbox[t]{5cm}{
%
%3a
%
\begin{tabbing}
$k_1=1$ \qquad \= $c=1$  \\
  \end{tabbing}

}}

%%---------------5------sp-------------------------------------

\noindent \raisebox{51pt}{5a.}
\begin{picture}(65,65)
\gs{50}{50}{5}
\gsdown{50}{50}{5}
\bs{0}{50}{6}

\put(60,60){\line(-1,0){50}} \put(60,50){\line(-1,0){50}}
\put(60,40){\line(-1,0){10}} \put(60,30){\line(-1,0){10}}
\put(60,20){\line(-1,0){10}} \put(60,10){\line(-1,0){10}}
\put(60,60){\line(0,-1){50}} \put(50,60){\line(0,-1){50}}
\put(40,60){\line(0,-1){10}} \put(30,60){\line(0,-1){10}}
\put(20,60){\line(0,-1){10}} \put(10,60){\line(0,-1){10}}

\end{picture}
\quad \raisebox{50pt}{
\parbox[t]{5cm}{
%
%4б
%
$c \geqslant 2$

}}

\noindent \raisebox{61pt}{5b.}
\begin{picture}(75,75)
\gs{60}{60}{5}
\gsdown{60}{60}{5}
\bs{0}{60}{7}

\put(70,70){\line(-1,0){50}} \put(70,60){\line(-1,0){50}}
\put(70,50){\line(-1,0){10}} \put(70,40){\line(-1,0){10}}
\put(70,30){\line(-1,0){10}} \put(70,20){\line(-1,0){10}}
\put(70,70){\line(0,-1){50}} \put(60,70){\line(0,-1){50}}
\put(50,70){\line(0,-1){10}} \put(40,70){\line(0,-1){10}}
\put(30,70){\line(0,-1){10}} \put(20,70){\line(0,-1){10}}

\end{picture}
\quad \raisebox{70pt}{
\parbox[t]{5cm}{
%
%    4a
%
\begin{tabbing}
$k_1=1$ \qquad \=  $c=1$  \\
\end{tabbing}

}}

%%--------------7-----------sp----------------------------------

\noindent \raisebox{31pt}{7.}
\begin{picture}(45,45)
\gs{30}{30}{2}
\gs{30}{20}{2}
\bs{0}{30}{4}
\put(40,40){\line(-1,0){20}} \put(40,30){\line(-1,0){20}}
\put(40,20){\line(-1,0){20}} \put(40,40){\line(0,-1){20}}
\put(30,40){\line(0,-1){20}} \put(20,40){\line(0,-1){20}}
\end{picture}
\quad \raisebox{40pt}{
\parbox[t]{5cm}{

\begin{tabbing}
$k_1=1$ \qquad \= $k_2=1$ \qquad \= $c=1$  \\
\end{tabbing}

}}

%%----------8------------sp----------------------------

\noindent \raisebox{31pt}{8.}
\begin{picture}(45,45)

\gs{30}{30}{3}
\gs{30}{20}{2}
\gs{30}{10}{1}
\bs{0}{30}{4}
\put(40,40){\line(-1,0){30}} \put(40,30){\line(-1,0){30}}
\put(40,20){\line(-1,0){20}} \put(40,10){\line(-1,0){10}}
\put(40,40){\line(0,-1){30}} \put(30,40){\line(0,-1){30}}
\put(20,40){\line(0,-1){20}} \put(10,40){\line(0,-1){10}}

\end{picture}
\quad \raisebox{30pt}{
\parbox[t]{12cm}{
%
%3a
%
$c \geqslant 2$, because there are invariants of types ``triangle'' and ``square''
}}

%-------------------------------------------------------------

\noindent If we add  blocks to the first row (and add blocks symmetrical to them), then these cases do not appear.

\noindent If there are $\geqslant 3$ blocks in the second row, then $c \geqslant 2$, because there are $2$ invariants of type ``square''.

\medskip

Combining all cases together we obtain the classification given in Table~\ref{sp}.

\section {Case of exceptional groups}

Fix a Borel subgroup $B \subseteq G$ and a maximal torus  $T \subseteq B$. Suppose $\Delta$ is the system of roots with respect to $T$, $\Pi$ is the system of simple roots corresponding to the choice of $B$, $I \subseteq \Pi$ is a subset. Any parabolic subgroup containing $B$ coincides with a standard parabolic subgroup $P_I$ whose Lie algebra can be decomposed into the direct sum of the Lie algebra of $T$ and the root subspaces corresponding to positive roots and roots that are linear combinations with integer coefficients of roots from $I$, i.e.,
$$ \mathfrak{p}_I = \mathfrak{t} \oplus
      \bigoplus_{\{\alpha > 0\} \cup \{\alpha \in \mathbb{Z}I\}} \mathfrak{g}_{\alpha}$$
The Lie algebra $\mathfrak{p}_I$ can be decomposed into the direst sum of the standard  Levi subalgebra $\mathfrak{l}$ and the Lie algebra of the unipotent radical. The roots from $\mathbb{Z}I$ correspond to the Lie subalgebra $\mathfrak{l}$ and the other roots $\{\alpha > 0\} \cap \{\alpha \notin \mathbb{Z}I\}$ correspond to the unipotent radical.

Suppose $P=P_I=L \rightthreetimes P_u$ and $Q=P_J=M \rightthreetimes Q_u$ are two parabolic subgroups. Then we have
\begin{equation*}
\begin{split}
\mathfrak{l} \cap \mathfrak{m} &= \mathfrak{t}
    \oplus \bigoplus_{ \alpha \in \mathbb{Z}(I \cap J)} \mathfrak{g}_{\alpha}, \\
\mathfrak{p}_u \cap \mathfrak{q}_u &= \bigoplus_{\alpha > 0, \alpha \notin \mathbb{Z}I \cup \mathbb{Z}J}
   \mathfrak{g}_{\alpha}.
\end{split}
\end{equation*}

\medskip

Now we describe a general method of computing the complexity of a linear representation of a reductive group \cite[section~1.4]{count_complexity}. Suppose $G$ is a reductive group, $V$ is its linear representation.  Denote by $v_{\lambda}$ a vector of weight $\lambda$. Consider a lowest weight vector $v_{-\lambda^{*}}$ of $V$. We can decompose the space $V$ as follows: $V = \langle v_{-\lambda^{*}} \rangle \oplus W$, where $W$ is $B$-stable. Consider an open $B$-stable subset $\mathring{V} = \mathbb{C^{\times}} v_{-\lambda^{*}} \oplus W$ in $V$. Let $P$ be the parabolic subgroup preserving the line $\langle v_{\lambda^{*}} \rangle \subseteq V^{\ast}$. Decompose $P$ into a semidirect product of the Levi subgroup and the unipotent radical: $P=L \rightthreetimes P_u$. Let $V'$ be an $L$-stable complementary subspace to $\mathfrak{p}_u v_{-\lambda^{*}}$ in $W$, i.e., $\mathfrak{p}_u v_{-\lambda^{*}} \oplus V' = W$. The subset $\mathring{V}$ is isomorphic to the direct product $\mathring{V} =P_u \times (\mathbb{C^{\times}} v_{-\lambda^{*}} \oplus V')$ as a $B$-variety. On the right-hand side, $P_u$ acts on the first factor by left translations, and  $B \cap L$ acts on the first factor by conjugation, while the action on the second factor is induced from the action of $L$. Therefore the codimension of a general orbit for the action $B= (B \cap L) \rightthreetimes P_u  : \mathring{V}$ equals the codimension of a general orbit for the action $B \cap L : \langle v_{-\lambda^{*}} \rangle \oplus V'$. Thus we reduced the question about the complexity of the action $G:V$ to the complexity of a smaller group $L$ acting  on a smaller space $\langle v_{-\lambda^{\ast}} \rangle \oplus V'$. In this way we can construct a sequence of groups $L^{(i)}$ and spaces $V^{(i)}$:

$$G=L^{(0)} \supseteq L^{(1)} \supseteq \dots \supseteq L^{(s)}$$

$$V=V^{(0)} \supseteq V^{(1)} \supseteq \dots \supseteq V^{(s)}$$

\noindent such that all irreducible  $L^{(s)}$-submodules $V^{(s)}$ are one-dimensional. Then the action $L^{(s)}$ on $V^{(s)}$ is determined by the weights $\mu_1, \dots ,\mu_N$ and the complexity equals $\dim V^{(s)} - \rk \langle \mu_1, \dots ,\mu_N \rangle = N - \rk \langle \mu_1, \dots ,\mu_N \rangle $.

Now we explain how this method is applied in our case. The intersection of the Levi subgroups $L \cap M$ and the intersection of the Lie algebras of the unipotent radicals $\mathfrak{p}_u \cap \mathfrak{q}_u$ are determined by some subsets of roots $E_1$ and $F_1$ corresponding to the weight subspaces of Lie algebras with nonzero weights. Suppose $\mu_1$ is a minimal root from $F_1$.
Let $E_1' = \{\alpha \in E_1 \mid \alpha + \mu_1 \in F_1\}$, $F_1' = \{\alpha + \mu_1 \mid \alpha \in E_1'\}$.
Put $E_2 = E_1 \setminus E_1'$, $F_2 = F_1 \setminus (F_1' \cup \{\mu_1\})$.
In the same way we construct $\mu_2$ and $E_3, F_3$ for $E_2$ and $F_2$ and so on, while $F_i$ is nonempty. So we obtain a set of weights $\mu_1, \mu_2, \dots ,\mu_N$ and complexity equals  $N - \rk \langle \mu_1, \dots, \mu_N \rangle$.

For every exceptional group $G$ we first compute complexity of double flag varieties for maximal parabolic subgroups (they correspond to subsets of simple roots obtained from the set of simple roots by removing one root). Then we reduce the parabolic subgroups. It is clear that we do not need to compute complexity for the cases where we have an estimate $c \geqslant 2$ from Lemmas \ref{subgroup}, \ref{complexity}.

Now consider particular groups.

\subsection{$\mathbf{G_2}$, $\mathbf{F_4}$}

For these groups there are no pairs of maximal parabolic subgroups for which the estimate on complexity is $\leqslant 1$.

\subsection {$\mathbf{E_8}$}

For all pairs of parabolic subgroups except the pair corresponding to the pair of roots $(\alpha_1, \alpha_1)$ we have an estimate on complexity $c \geqslant 2$. For this pair the complexity equals $2$. So there are no suitable cases.

\subsection {$\mathbf{E_6}$, $\mathbf{E_7}$}

The list of pairs of roots for which the estimate on complexity for corresponding parabolics is not greater than~$1$, and complexities are given in Table~\ref{e6e7}.

\begin{table}[!h]
\begin{center}
\begin{tabular}{|ll|c|}
\hline

\multicolumn{3}{|c|}{$E_6$} \\ \hline

$\Pi \backslash I$ & $\Pi \backslash J$ & complexity \\
\hline

1 & 1 & 0 \\
1 & 2 & 0 \\
1 & 4 & 0 \\
1 & 5 & 0 \\
1 & 6 & 0 \\
2 & 5 & 0 \\
4 & 5 & 0 \\
5 & 5 & 0 \\
5 & 6 & 0 \\
6 & 6 & 2 \\

1 & 1, 2 & 1 \\
1 & 1, 5 & 0 \\
1 & 1, 6 & 1 \\
1 & 4, 5 & 1 \\
1 & 5, 6 & 1 \\
5 & 1, 2 & 1 \\
5 & 1, 5 & 0 \\
5 & 1, 6 & 1 \\
5 & 4, 5 & 1 \\
5 & 5, 6 & 1 \\
\hline

\end{tabular}
\begin{tabular}{|ll|c|}
\hline

\multicolumn{3}{|c|}{$E_7$} \\ \hline

$\Pi \backslash I$ & $\Pi \backslash J$ & complexity \\ \hline

1 & 1 & 0 \\
1 & 2 & 1 \\
1 & 6 & 0 \\
1 & 7 & 0 \\
6 & 6 & 2 \\

1 & 1, 2 & 2 \\
1 & 1, 6 & 2 \\
\hline
\end{tabular}
\caption{pairs of parabolic subgroups such that the estimate on complexity is $\leqslant 1$, and corresponding complexities}
\label{e6e7}
\end{center}
\end{table}

\bigskip

For example we consider in details the case for the group $E_6$ and subsets $\Pi \backslash I = \{\alpha_1\}$, $\Pi \backslash J = \{\alpha_5\}$. In this case, in the notation of \cite[Table $1$]{Vin}, $E_1 = \{\ee{i}{j} \mid i,j = 2, \dots, 5; i < j\} \cup \{\eeee{6}{i}{j}{} \mid i,j = 2, \dots, 5; i < j\}$, $F_1 = \{\ee{1}{6} \} \cup \{\eeee{1}{i}{j}{} \mid i,j = 2, \dots, 5; i < j\} \cup \{\e\}$. Take $\ee{1}{6}$ as $\mu_1$. Then  $E_1' = \{\eeee{6}{i}{j}{} \mid i,j = 2, \dots, 5; i < j\}$ and
$F_1' = \{\eeee{1}{i}{j}{} \mid i,j = 2, \dots, 5; i < j\}$, whence we have $E_2 = \{\ee{i}{j} \mid i,j = 2, \dots, 5; i < j\}$, and $F_2 = \{\e\}$. Take $\e$ as $\mu_2$, then $F_3 = \varnothing$. The complexity equals
$2 - \rk \langle \mu_1, \mu_2 \rangle = 0$.

The final result is formulated in Theorem  \ref{special}.

\end{document}